\numberwithin{equation}{section}
\definecolor{qqwuqq}{rgb}{0,0,0}
\begin{document}

\date{\today}

\title[Rotational Surfaces]{Rotational Surfaces with second fundamental form 
of constant length}

\author[A. Barreto]{Alexandre Paiva Barreto} 
\address{Departamento de Matem\'atica, 
	Universidade Federal de S\~ao Carlos (UFSCar),
	Brasil}
\email{alexandre@dm.ufscar.br}
\thanks{All authors are partially supported by CNPq(Brasil)}

\author[F. Fontenele]{Francisco Fontenele}
\address{Departamento de Geometria, Universidade Federal Fluminense (UFF), 
Brasil.}
\email{fontenele@mat.uff.br}

\author[L. Hartmann]{Luiz Hartmann}
\address{Departamento de Matem\'atica, 
	Universidade Federal de S\~ao Carlos (UFSCar),
	Brasil}
\email{hartmann@dm.ufscar.br}
\urladdr{http://www.dm.ufscar.br/profs/hartmann}

\subjclass[2010]{Primary 53A05, 53C42 ; Secondary 53C40, 14Q10.}
\keywords{Rotational surface; length of the second fundamental form; 
Weingarten 
surface.}

\begin{abstract} 
We obtain an infinite family of complete non embedded rotational surfaces
in $\mathbb R^3$ whose second fundamental forms have length equal to one at any 
point. Also we prove that a complete rotational surface with second 
fundamental form of constant length is either a round sphere, a circular 
cylinder or, up to a homothety and a 
rigid motion, a member of that family. In particular, the 
round sphere and the circular cylinder are
the only complete embedded rotational surfaces in $\mathbb R^3$
with second fundamental form of constant length.
\end{abstract}

\maketitle

\tableofcontents

\section{Introduction}

A surface $S$ in the $3$-dimensional Euclidean space 
is called a Weingarten surface if there exists some relation
\begin{equation}\label{relW}
	W(\lambda_1,\lambda_2)=0, 
\end{equation}
among its principal curvatures $\lambda_1$ and $\lambda_2$. Since the principal 
curvatures of a surface can always be determined from its mean curvature $H$ 
and its Gaussian curvature $K$, and vice-versa, the relation \eqref{relW} can 
always be rewritten as a relation $U(H,K)=0$.

\bigskip

Weingarten surfaces is a classical topic in Differential Geometry that began 
with the works of Weingarten in the middle of the 19th century 
\cite{Wei1,Wei2} and that has been a subject of interest for many authors 
since then (see Chern \cite{Che1}, Hartman and 
Winter \cite{HW}, Hopf \cite{Hop1}, Voss \cite{Vos}, Rosenberg and S\'{a} 
Earp \cite{RSE}, K\"{u}hnel and Steller \cite{KS}, L\'{o}pez 
\cite{Lop1,Lop3,Lop,Lop4}, to name just a few).
\bigskip

Minimal surfaces, surfaces with constant mean curvature and surfaces with 
constant Gaussian curvature are classical examples of Weingarten surfaces. 
Another well known class (generalizing the previous ones) is that of the 
linear 
Weingarten surfaces, \ie Weingarten surfaces verifying either the relation
\begin{equation}\label{relLWI}
	W(\lambda_1,\lambda_2)= a\lambda_{1}+b\lambda_{2}= c
\end{equation}
\noindent or the relation
\begin{equation}\label{relLWII}
	U(H,K)=aH+bK=c,
\end{equation}
where $a,b,c\in\mathbb{R}$ are constants such that $a$ and $b$ do not vanish simultaneously.

\bigskip

The complete classification of Weingarten surfaces is far from being 
achieved. The 
existent results deal mostly with the linear case, 
sometimes making use of additional topological/geometric hypothesis and/or 
working with important subclasses of surfaces such as revolution surfaces \cite{Hop1,Lop,KS,RSE}, tubes along curves and cyclic surfaces \cite{Lop3,Lop4}, ruled surfaces 
and helicoidal surfaces \cite{Kuh}, translation surfaces \cite{DGW,LM}, etc. In general, the approaches used to treat the linear case do not apply to the non-linear case. Therefore, results concerning non-linear Weingarten surfaces are more rare \cite{RSE,KS}. 

\bigskip

In this paper we study rotational surfaces  in the $3$-dimensional Euclidean 
space whose second fundamental forms have constant length (recall that the 
squared length 
$|A|^2$ of the second fundamental form of a surface in $\mathbb R^3$ is 
defined as the trace of $A^2$, where $A$ is its shape operator). In other 
words, we study rotational Weingarten surfaces 
that satisfy the non-linear relation
\begin{equation}\label{relacao segforma cte}%
	W(\lambda_1,\lambda_2)=\left(  \lambda_{1}\right)  ^{2}+\left(  
	\lambda_{2}\right)^{2}=c, 
\end{equation}
or equivalently
\begin{equation}\label{relacao segforma cte2}%
U(H,K)=4H^{2}-2K=c,
\end{equation}
for some $c>0$.

\bigskip

In this case we prove the following result (notice that since the property of 
having constant $|A|$ is invariant by homotheties in $\mathbb R^3$, 
we 
can assume without loss of generality that  $c=1$):

\begin{theorem}\label{Family}
There are two infinite families $\mathcal F_1$ and $\mathcal F_2$ of complete 
non embedded rotational surfaces in $\mathbb R^3$ with $|A|=1$. The family 
$\mathcal F_1$ is one parameter and its members are periodic $C^{\infty}$ 
surfaces, while the members of $\mathcal F_2$ are $C^3$ surfaces. Moreover, any 
complete rotational
$C^2$ surface with $|A|=1$ is either a round sphere of radius $\sqrt{2}$, a
circular cylinder of radius 1 or, up to a rigid motion in $\mathbb R^3$, a
member of one of the two families.
\end{theorem}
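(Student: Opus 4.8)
The strategy is to reduce the problem to an ODE analysis for the profile curve and then integrate. Let me set up the rotational surface. Write the surface as generated by rotating a curve in the $xz$-plane about the $z$-axis. I'll parametrize the profile curve by arclength, $\alpha(s) = (f(s), 0, g(s))$ with $f > 0$ and $(f')^2 + (g')^2 = 1$. Then the principal curvatures are $\lambda_1 = f'g'' - f''g'$ (curvature of the profile, associated with the meridian direction) and $\lambda_2 = g'/f$ (associated with the parallel direction). Since $(f')^2+(g')^2=1$, we can write $f' = \cos\theta(s)$, $g' = \sin\theta(s)$, so that $\lambda_1 = \theta'$ and $\lambda_2 = \sin\theta/f$. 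The condition $|A|^2 = \lambda_1^2 + \lambda_2^2 = 1$ becomes

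**First step: the ODE.** With the substitution above, the equation $|A|=1$ reads $(\theta')^2 + (\sin\theta/f)^2 = 1$, together with $f' = \cos\theta$. This is a first-order system in $(f,\theta)$. I would first treat the degenerate/constant solutions: if $\lambda_2 \equiv 0$ (i.e., $\sin\theta\equiv 0$) then $\theta' \equiv \pm 1$ is impossible unless we are in a limiting case — more carefully, $g'\equiv 0$ forces the profile to be vertical, giving a cylinder, and then $\lambda_1 = 1/f$ is forced to be $\pm 1$, so $f\equiv 1$: the circular cylinder of radius $1$. If instead the profile is a circle (meridian has constant curvature) one recovers the sphere; requiring $\lambda_1^2+\lambda_2^2=1$ on the round sphere of radius $r$ gives $2/r^2 = 1$, i.e. $r = \sqrt 2$. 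These are the two ``trivial'' complete examples.

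**Second step: the generic phase-plane analysis.** For the non-trivial case I would analyze the planar system, ideally by choosing a good first integral or a clever change of variable. A natural move: set $u = \sin\theta / f = \lambda_2$, so $u \in [-1,1]$ and $\theta' = \pm\sqrt{1-u^2}$. Differentiating $u$ and using $f' = \cos\theta$, $\theta' = \pm\sqrt{1-u^2}$ should produce an autonomous ODE for $u$ as a function of arclength (or of $\theta$), which one can integrate by quadratures. The key qualitative questions are: (i) does the solution exist for all $s\in\mathbb R$ (completeness)? (ii) is $f$ bounded away from $0$ and from $\infty$, forcing periodicity, or does $f\to\infty$? (iii) what is the regularity at points where $\theta$ or $\lambda_2$ passes through distinguished values (this is where the $C^\infty$ versus $C^3$ dichotomy must come from — presumably $\mathcal F_1$ consists of solutions that stay in the smooth regime and close up periodically, while $\mathcal F_2$ consists of solutions that hit the $z$-axis $f=0$, where the surface is only $C^3$ because of how the profile meets the axis). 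I expect the analysis to show that, after normalizing by a homothety (to fix $c=1$) and a rigid motion (to fix the position of the axis and an arclength origin), the non-trivial complete solutions form exactly these two one-parameter-type families; uniqueness of solutions to the ODE with given initial data yields that every complete $C^2$ rotational example is one of them.

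**Main obstacle.** The hard part will be steps (ii)--(iii): controlling the global behavior of the profile curve — showing the solution extends to a complete surface and determining precisely when it closes up (yielding the periodic $C^\infty$ family $\mathcal F_1$) versus when it limits onto the rotation axis with exactly $C^3$ regularity (the family $\mathcal F_2$). This requires a careful study of the quadrature near the boundary values $u = \pm 1$ and $f = 0$, estimating the asymptotics of the integral to decide both completeness of the induced metric and the exact order of differentiability at the axis. Handling the non-embeddedness (self-intersections of the profile, or the surface overlapping itself after a period) and confirming it occurs for every member is a further bookkeeping task built on the same quadrature estimates. The constant solutions and the uniqueness/classification part, by contrast, are routine once the ODE is set up.
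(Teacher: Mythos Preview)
Your ODE setup is essentially right (up to the convention swap $\cos\theta\leftrightarrow\sin\theta$, $z\leftrightarrow f$ compared with the paper), and the identification of the sphere and cylinder is fine. But two genuine gaps would block the argument.

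\textbf{Monotonicity of $\theta$.} You pass immediately to $\theta'=\pm\sqrt{1-u^2}$ with a fixed sign and treat the system as a flow in the $(\theta,f)$-plane. This is only legitimate once you know that $\theta'$ does not change sign along the profile, i.e.\ that the profile curve is convex. That is not automatic from the equation $(\theta')^2+\lambda_2^2=1$; the paper proves it separately (its Proposition~\ref{monotona} via Lemma~\ref{ThetaLinhaZero}), and the argument is not routine: one has to show that at any point where $\theta'=0$ one necessarily has radial coordinate equal to $1$ and $\theta\in\pi\mathbb Z$, and then rule out sign changes by a comparison argument. Without this step the phase-plane reduction is unjustified.

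\textbf{The nature of $\mathcal F_2$.} Your guess that $\mathcal F_2$ arises from profiles hitting the rotation axis ($f=0$) is wrong; the only trajectory reaching $f=0$ is the sphere. The correct picture, in the paper's coordinates $(\theta,z)$ with boundary $z=|\cos\theta|$, is this: each trajectory through $(\pi,\lambda)$ with $\lambda>1$ either (a) crosses $\theta=0$ with $z>1$ and extends periodically for all time (these give $\mathcal F_1$, parametrized by $\lambda>\lambda_0$), or (b) runs into $\partial\Omega$. Case~(b) splits further: for $1<\lambda<\lambda_0$, $\lambda\neq\sqrt2$, the trajectory hits a boundary point with $0<z<1$ and the surface \emph{cannot} be completed; for the unique critical value $\lambda=\lambda_0$ the trajectory limits, in finite arclength, onto $(\theta,z)=(0,1)$. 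The family $\mathcal F_2$ is then built by gluing copies of the $\lambda_0$ profile end-to-end, optionally inserting arcs of the radius-$1$ cylinder ($z\equiv1$, $\theta'\equiv0$) between them. The $C^3$ regularity comes from computing $\theta',\theta'',\theta'''\to 0,0,\tfrac13$ as the trajectory approaches $(0,1)$; matching with a cylinder segment (where $\theta$ is constant) is therefore $C^3$ but not $C^4$. Your proposed asymptotic analysis ``near $u=\pm1$ and $f=0$'' is looking at the wrong boundary and would miss this gluing mechanism entirely, as well as the critical role of $\lambda_0$.
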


\begin{cor}\label{Embedded}
	The only complete embedded rotational $C^2$-surfaces in $\mathbb R^3$ with 
	second
	fundamental form of constant length are the
	round sphere and the circular cylinder.
\end{cor}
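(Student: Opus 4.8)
\medskip
\noindent\textbf{Proof proposal for Corollary \ref{Embedded}.}
The plan is to read the corollary directly off from Theorem \ref{Family}. Let $S\subset\mathbb R^3$ be a complete embedded rotational $C^2$ surface with $|A|$ constant, say $|A|^2\equiv c$. First I would dispose of the trivial case $c=0$: then $A\equiv 0$, so $S$ is totally geodesic and, being complete, an affine plane; since the paper works throughout under the standing assumption $c>0$ (cf. \eqref{relacao segforma cte}), this case lies outside its scope, and from now on $c>0$. Next I would normalize: because completeness, embeddedness, being rotational, and the condition ``$|A|$ is a positive constant'' are all invariant under homotheties of $\mathbb R^3$ — a homothety of ratio $t>0$ multiplies $|A|$ by $1/t$ — one may rescale $S$ so that $c=1$ without losing or gaining examples.

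With the normalization in place, Theorem \ref{Family} supplies the complete list: $S$ is a round sphere of radius $\sqrt 2$, a circular cylinder of radius $1$, or, up to a rigid motion of $\mathbb R^3$, a member of one of the families $\mathcal F_1$ or $\mathcal F_2$. The point I would use is that every surface in $\mathcal F_1\cup\mathcal F_2$ is non embedded, again by Theorem \ref{Family}; since a rigid motion carries a non embedded surface to a non embedded surface, $S$ cannot be of this last type. Hence $S$ is the round sphere or the circular cylinder, and undoing the initial homothety returns the conclusion for the original (arbitrary) radius. That the round sphere and the circular cylinder are themselves complete, embedded, rotational and have $|A|$ constant is clear, so the list is sharp.

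I expect essentially no obstacle here: all of the geometric content has already been produced in the proof of Theorem \ref{Family}, which in particular establishes that no member of $\mathcal F_1$ or $\mathcal F_2$ is embedded. The only step demanding an explicit word is the reduction to $c=1$, and even that is routine, since homotheties act transitively on the admissible values $c>0$ and preserve every hypothesis and conclusion appearing in the statement.
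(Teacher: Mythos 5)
Your proposal is correct and follows essentially the same route as the paper, which derives Corollary \ref{Embedded} directly from Theorem \ref{Family}: after the homothety normalization to $|A|\equiv 1$ (already noted before the theorem), the classification leaves only the sphere and the cylinder as embedded possibilities, since all members of $\mathcal F_1\cup\mathcal F_2$ are non embedded. Your extra remark about the degenerate case $|A|\equiv 0$ (the plane) is a reasonable clarification but does not change the argument.
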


Non trivial examples of compact surfaces (embedded or immer\-sed) with second 
fundamental form of constant length are unknown by the authors.
In view of Theorem 1.1 and Corollary 1.2, one can then formulate the following question:


\begin{question}
- Is there a compact surface, other than the round sphere, embedded/immersed in 
the Euclidean 3-space whose second fundamental form has constant length?
\end{question}


It is worth to point out that the above question has a negative answer in the class of the
compact surfaces with positive Gaussian curvature \cite[Theorem 5 on p. 347 and
Section 4]{Ale}  (see also \cite{Kor} or \cite[Theorem 2.3]{FS}).

\bigskip

The importance of the class of hypersurfaces whose second fundamental forms 
have constant length goes beyond the context of Weingarten  surfaces. We 
mention \cite{DX} (see also \cite{Gua}), where it is proved that the 
generalized cylinders are the only complete embedded self-shrinkers in $\R^3$ 
with polynomial volume growth whose second fundamental forms have constant 
length.

\bigskip

The paper is organized as follows. In Section \ref{convexprofcurve}, we prove 
that the profile curve of any rotational surface in $\mathbb R^3$ with 
$|A|\equiv 1$ is convex, \ie its
signed curvature does not change signal. This fact enable us to reduce the 
study of rotational surfaces with $|A|\equiv 1$ to the study of the 
trajectories of a certain vector field in the plane. 
The study of this vector field is made in Section \ref{phasefundvectfield}. 
Finally, we prove Theorem \ref{Family} in Section \ref{proofmaintheorem}.

\subsection*{Acknowledgments} The authors would like to thank Thiago de Melo 
(IGCE-UNESP) for helpful conversations during the preparation of this work and 
for his help with the figures.

\section{Convexity of the profile curves}\label{convexprofcurve}

Our goal in this section is to prove that the profile curve $\mathcal{C}$ of
any rotational $C^2$-surface $M\subset\mathbb R^3$, whose shape operator
$A$ has length $|A|\equiv 1$, is convex. By applying a rigid motion of
$\mathbb R^3$ if necessary, we can assume that $\mathcal{C}$ is contained in
the $xz$-plane and that the axis of revolution is the $x$-axis.

Let $\alpha(t)=(x(t),0,z(t)),\;t\in\mathbb (a,b)$, be a
parametrization of $\mathcal{C}$ such that $||\alpha'(t)||=1$ and $z(t)>0$
for all $t$, and let $\theta:(a,b)\to\mathbb R$ be a continuous
(and, hence, of class $C^1$) function satisfying
\begin{equation}\label{Rot1}
\alpha'(t)=(x'(t),0,z'(t))=(\cos\theta(t),0,\sin\theta(t)),\;\;\;t\in (a,b).
\end{equation}
It is easy to see that the function $\theta$ satisfying
\Eqref{Rot1} is unique up to an integer multiple of $2\pi$. The
principal curvatures of $M$ are given by (see \eg \cite{Lop})
\begin{equation}\label{Rot2}
\lambda_1(t)=\theta'(t),\;\;\;\;\lambda_2(t)=-\frac{\cos\theta(t)}{z(t)}.
\end{equation}
Since $|A|^2=\lambda_1^2+\lambda_2^2\equiv 1$ by hypothesis, one
then has
\begin{equation}\label{Rot3}
\theta'(t)^2+\frac{\cos^2\theta(t)}{z^2(t)}=1,\;\;\;t\in
(a,b).
\end{equation}

As we observed in the introduction, the proof of Theorem
\ref{Family} will be based on a careful study of the trajectories
of a suitable vector field in the plane. The fundamental property
of the profile curves that makes this approach possible is
provided by the following proposition (recall that the signed curvature 
of $\alpha$ is $\theta'$):

\begin{prop}\label{monotona}
	The function $\theta:(a,b)\to\mathbb R$ is monotone.
\end{prop}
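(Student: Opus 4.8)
The plan is to argue by contradiction: suppose $\theta$ is not monotone on $(a,b)$. Since $\theta$ is $C^1$, this means $\theta'$ changes sign, so there is an interior point $t_0\in(a,b)$ with $\theta'(t_0)=0$. I would like to show that such a zero of $\theta'$ forces a rigidity that propagates and eventually contradicts \eqref{Rot3} (or the geometric setup). The key relation is \eqref{Rot3}: at $t_0$ we get $\cos^2\theta(t_0)=z^2(t_0)$, i.e. $z(t_0)=|\cos\theta(t_0)|$, so the zeros of $\theta'$ are exactly the points where $z(t)=|\cos\theta(t)|$. Differentiating \eqref{Rot3} gives
\begin{equation}\label{diffRot3}
\theta'(t)\,\theta''(t)-\frac{\cos\theta(t)\sin\theta(t)\,\theta'(t)}{z^2(t)}-\frac{\cos^2\theta(t)\,z'(t)}{z^3(t)}=0,
\end{equation}
and using $z'(t)=\sin\theta(t)$ from \eqref{Rot1}, at a zero $t_0$ of $\theta'$ the first two terms vanish while the last term becomes $-\cos^2\theta(t_0)\sin\theta(t_0)/z^3(t_0)$. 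Hence at such $t_0$ we must have $\cos\theta(t_0)=0$ or $\sin\theta(t_0)=0$.

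The case $\cos\theta(t_0)=0$ is immediately impossible, since then \eqref{Rot3} at $t_0$ gives $\theta'(t_0)^2=1\neq 0$. So every critical point $t_0$ of $\theta$ satisfies $\sin\theta(t_0)=0$ and $z(t_0)=|\cos\theta(t_0)|=1$, i.e. $\theta(t_0)\in\pi\mathbb Z$ and $z(t_0)=1$. Now I would examine the second-order behaviour at $t_0$: from \eqref{diffRot3}, dividing by $\theta'$ near $t_0$ and taking the limit (or by a direct Taylor expansion of \eqref{Rot3}), one extracts a value for $\theta''(t_0)$; together with $\theta'(t_0)=0$ this should identify $(\theta,z)$ near $t_0$ as a solution of an ODE with prescribed initial data. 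The main point is that the constant solution $\theta\equiv \theta(t_0)$ (with $z$ adjusting so that $\lambda_2^2\equiv 1$, forcing $z\equiv 1$, $\cos\theta\equiv\pm1$) is itself a solution, but a non-constant profile curve cannot agree with it to the order dictated by a uniqueness argument — so $\theta'$ cannot actually vanish at an interior point unless $\theta$ is constant, in which case it is trivially monotone.

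To make the uniqueness argument clean I would recast \eqref{Rot3} together with $z'=\sin\theta$ and $x'=\cos\theta$ as a planar autonomous system for $(\theta,z)$ (this is exactly the vector field promised for Section~\ref{phasefundvectfield}): writing $\theta'=\pm\sqrt{1-\cos^2\theta/z^2}$ is singular precisely where $\theta'=0$, so instead I would use the desingularized form obtained from \eqref{diffRot3}, namely
\begin{equation}\label{desing}
\theta''=\frac{\cos\theta\sin\theta}{z^2}+\frac{\cos^2\theta\sin\theta}{z^3\,\theta'}\cdot\theta',
\end{equation}
which after cancellation is $\theta''=\dfrac{\cos\theta\sin\theta}{z^2}\Big(1+\dfrac{\cos\theta}{z\,\theta'}\Big)\theta'$ — the honest way is to keep \eqref{diffRot3} as the defining relation and observe it is equivalent, off the bad set, to a locally Lipschitz first-order system in $(\theta,z)$ once we also carry $w:=\theta'$ and use $w^2 = 1-\cos^2\theta/z^2$ to eliminate the apparent singularity. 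The \textbf{main obstacle} is precisely this desingularization: showing that at a critical point $t_0$ the solution is still determined by $(\theta(t_0),z(t_0))$ despite $\theta'(t_0)=0$ making the naive system degenerate, and then checking that the unique such solution is the constant one. Once that is in place, monotonicity of $\theta$ on any interval where the profile is non-constant follows, and the constant case is trivial.
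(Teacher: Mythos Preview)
Your opening steps are correct and match the paper's Lemma \ref{ThetaLinhaZero}(i): every zero $t_0$ of $\theta'$ satisfies $\sin\theta(t_0)=0$ and $z(t_0)=1$. But the proof strategy you build on this --- a uniqueness argument forcing $\theta$ to be constant whenever $\theta'$ has a zero --- cannot work, because the conclusion you are aiming for is false. The paper's family $\mathcal F_2$ (Theorem \ref{surfaces}(ii), Theorem \ref{curves}(ii)) consists precisely of profile curves that are \emph{not} constant, yet have interior points where $\theta'=0$: one glues copies of $\alpha_{\lambda_0}$ with horizontal segments at height $z=1$, and at every gluing point $\theta'$ vanishes while $\theta$ remains (weakly) increasing. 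So $\theta$ monotone does not mean $\theta'$ never vanishes, and no desingularization will give you uniqueness at the singular points $(k\pi,1)$ --- non-uniqueness there is a genuine feature of the problem, reflected in the non-Lipschitz square root in $X$. The paper even computes (see \eqref{Rot59}) that along the non-constant branch $\theta'''\to 1/3$ at the singular point, so the constant and non-constant solutions agree to second order and only separate at third order.

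The paper's proof avoids uniqueness entirely. The extra ingredient you are missing is Lemma \ref{ThetaLinhaZero}(iii): near any zero of $\theta'$ one has $z\ge 1$. This is proved by a direct comparison argument using part (ii), namely $|z'|\ge|\theta'|$ whenever $z<1$. With (iii) in hand, the contradiction is elementary: if $\theta$ had a strict interior maximum at some $\xi$, then $\theta'(\xi)=0$, $\theta(\xi)\in\pi\mathbb Z$, $z(\xi)=1$; but on one side of $\xi$ (depending on the parity of $\theta(\xi)/\pi$) one has $z'=\sin\theta$ of a definite sign forcing $z<1$, contradicting (iii). The same applies to a strict minimum. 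Thus the real work is not an ODE uniqueness theorem but the barrier estimate (iii), which rules out $z$ dipping below $1$ near a critical point.
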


Before proceeding to the proof of Proposition \ref{monotona}, let us
explain how to relate profile curves with the trajectories of a specific
vector field.

Let $\alpha$ and $\theta$ be as above. Assuming that 
$\theta$ is monotone,
re\-pa\-ra\-me\-tri\-zing $\alpha$ we can assume that $\theta'\geq 0$. Then,
by \Eqref{Rot1} and \eqref{Rot3},
\begin{equation}\label{sistema}
\begin{cases}
\theta'(t)=\sqrt{1-\frac{\cos^2\theta(t)}{z^2(t)}},\\
z'(t)=\sin\theta(t).
\end{cases}
\end{equation}
Let
$X:\Omega\to\mathbb R^2$ be the (smooth) vector field defined by
\begin{equation}\label{VectorField}
X(\theta,z)=\left(\sqrt{1-\frac{\cos^2\theta}{z^2}},\sin\theta\right),
\end{equation}
where $\Omega=\{(\theta,z)\in\mathbb R^2:z>|\cos\theta|\}$.
As long as $z(t)>|\cos\theta(t)|$, the system in \Eqref{sistema} can
be rewritten as
\begin{equation}\label{Trajetoria}
(\theta'(t),z'(t))=X(\theta(t),z(t)),
\end{equation}
and so the curve $t\mapsto(\theta(t),z(t))$ is a trajectory of
$X$. A representation of $\Omega$ and the vector field $X$ can be seen in
Figure \ref{Figura1}.
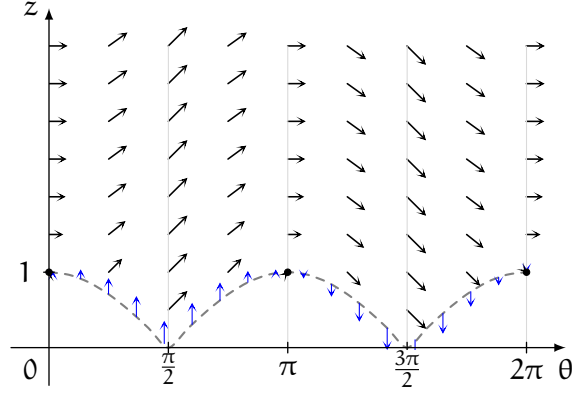
\begin{figure}[htp!]
	\begin{tikzpicture}%
	[line cap=round,>=latex,x=1cm,y=1cm]
	\clip(-.5,-1.) rectangle (7,4.6);
	\begin{scope}[domain=0:2*pi]
	\draw[dashed,black!50,thick] plot[smooth] (\x,{abs(cos(\x r))});
	\end{scope}
	\begin{axis}[
	anchor=origin,
	x=1cm,y=1cm,
	domain=0.05:2*pi,
	axis lines=none,
	axis line style={},
	xtick=\empty,
	ytick=\empty,
	]
	\addplot[%
	blue,
	quiver={u=0,v=0.3 * sin (\x r)},
	-{Stealth[length=3.2pt,width=3.2pt,inset=2pt]},
	line width=.4pt,
	samples=18,
	] {abs(cos(\x r))};
	\end{axis}
	\begin{scope}[line width=.6pt,-{
		Stealth[length=3.2pt,width=3.2pt,inset=2pt]%
	}]
	\clip(0,0) rectangle (6.6,4.6);
	\draw [color=qqwuqq] (0.,1.)-- (0.,1.);
\draw [color=qqwuqq] (0.,1.5)-- (0.18633899812498247,1.5);
\draw [color=qqwuqq] (0.,2.)-- (0.21650635094610965,2.);
\draw [color=qqwuqq] (0.,2.5)-- (0.229128784747792,2.5);
\draw [color=qqwuqq] (0.,3.)-- (0.23570226039551584,3.);
\draw [color=qqwuqq] (0.,3.5)-- (0.23957871187497748,3.5);
\draw [color=qqwuqq] (0.,4.)-- (0.24206145913796356,4.);
\draw [color=qqwuqq] (0.785,1.)-- (0.9617062952763415,1.1767062952763414);
\draw [color=qqwuqq] (0.785,1.5)-- (1.0054541925614173,1.6767062952763414);
\draw [color=qqwuqq] (0.785,2.)-- (1.0188402845909412,2.1767062952763414);
\draw [color=qqwuqq] (0.785,2.5)-- (1.0247832737420322,2.6767062952763414);
\draw [color=qqwuqq] (0.785,3.)-- (1.0279506385142394,3.1767062952763414);
\draw [color=qqwuqq] (0.785,3.5)-- (1.0298406586867184,3.6767062952763414);
\draw [color=qqwuqq] (0.785,4.)-- (1.031059585617779,4.176706295276341);
\draw [color=qqwuqq] (1.57,0.5)-- (1.8199996829316838,0.7499999207329586);
\draw [color=qqwuqq] (1.57,1.)-- (1.8199999207329587,1.2499999207329586);
\draw [color=qqwuqq] (1.57,1.5)-- (1.819999964770207,1.7499999207329586);
\draw [color=qqwuqq] (1.57,2.)-- (1.819999980183242,2.2499999207329586);
\draw [color=qqwuqq] (1.57,2.5)-- (1.8199999873172752,2.7499999207329586);
\draw [color=qqwuqq] (1.57,3.)-- (1.8199999911925522,3.2499999207329586);
\draw [color=qqwuqq] (1.57,3.5)-- (1.8199999935292221,3.7499999207329586);
\draw [color=qqwuqq] (1.57,4.)-- (1.8199999950458108,4.249999920732959);
\draw [color=qqwuqq] (2.355,1.)-- (2.531987727162108,1.176987727162108);
\draw [color=qqwuqq] (2.355,1.5)-- (2.5755545088241694,1.676987727162108);
\draw [color=qqwuqq] (2.355,2.)-- (2.5888934883477996,2.176987727162108);
\draw [color=qqwuqq] (2.355,2.5)-- (2.594816481690816,2.676987727162108);
\draw [color=qqwuqq] (2.355,3.)-- (2.5979733994507788,3.176987727162108);
\draw [color=qqwuqq] (2.355,3.5)-- (2.5998572521363332,3.676987727162108);
\draw [color=qqwuqq] (2.355,4.)-- (2.6010722271465747,4.176987727162108);
\draw [color=qqwuqq] (3.14,1.)-- (3.1403981632291305,1.0003981632291217);
\draw [color=qqwuqq] (3.14,1.5)-- (3.3263391871876626,1.5003981632291217);
\draw [color=qqwuqq] (3.14,2.)-- (3.3565064424757134,2.000398163229122);
\draw [color=qqwuqq] (3.14,2.5)-- (3.3691288400996986,2.500398163229122);
\draw [color=qqwuqq] (3.14,3.)-- (3.375702297762325,3.000398163229122);
\draw [color=qqwuqq] (3.14,3.5)-- (3.3795787388839442,3.500398163229122);
\draw [color=qqwuqq] (3.14,4.)-- (3.3820614796046087,4.0003981632291215);
\draw [color=qqwuqq] (3.925,1.)-- (4.101424415167119,0.8235755848328808);
\draw [color=qqwuqq] (3.925,1.5)-- (4.14535383088424,1.3235755848328807);
\draw [color=qqwuqq] (3.925,2.)-- (4.158787068861315,1.8235755848328807);
\draw [color=qqwuqq] (3.925,2.5)-- (4.164750061277844,2.3235755848328807);
\draw [color=qqwuqq] (3.925,3.)-- (4.167927875502874,2.8235755848328807);
\draw [color=qqwuqq] (3.925,3.5)-- (4.169824064154543,3.3235755848328807);
\draw [color=qqwuqq] (3.925,4.)-- (4.1710469434715485,3.8235755848328807);
\draw [color=qqwuqq] (4.71,0.5)-- (4.959997146375503,0.2500007134030705);
\draw [color=qqwuqq] (4.71,1.)-- (4.959999286596929,0.7500007134030705);
\draw [color=qqwuqq] (4.71,1.5)-- (4.95999968293222,1.2500007134030704);
\draw [color=qqwuqq] (4.71,2.)-- (4.959999821649423,1.7500007134030704);
\draw [color=qqwuqq] (4.71,2.5)-- (4.9599998858556456,2.2500007134030704);
\draw [color=qqwuqq] (4.71,3.)-- (4.959999920733092,2.7500007134030704);
\draw [color=qqwuqq] (4.71,3.5)-- (4.959999941763091,3.2500007134030704);
\draw [color=qqwuqq] (4.71,4.)-- (4.959999955412368,3.7500007134030704);
\draw [color=qqwuqq] (5.495,1.)-- (5.672268710110554,0.8227312898894459);
\draw [color=qqwuqq] (5.495,1.5)-- (5.715654778717304,1.3227312898894459);
\draw [color=qqwuqq] (5.495,2.)-- (5.728946679600427,1.8227312898894459);
\draw [color=qqwuqq] (5.495,2.5)-- (5.734849684789206,2.322731289889446);
\draw [color=qqwuqq] (5.495,3.)-- (5.737996158082171,2.822731289889446);
\draw [color=qqwuqq] (5.495,3.5)-- (5.739873844335265,3.322731289889446);
\draw [color=qqwuqq] (5.495,4.)-- (5.741084867929778,3.822731289889446);
\draw [color=qqwuqq] (6.28,1.)-- (6.280796325448285,0.9992036745517155);
\draw [color=qqwuqq] (6.28,1.5)-- (6.466339754372633,1.4992036745517154);
\draw [color=qqwuqq] (6.28,2.)-- (6.496506717063363,1.9992036745517154);
\draw [color=qqwuqq] (6.28,2.5)-- (6.509129006154776,2.4992036745517154);
\draw [color=qqwuqq] (6.28,3.)-- (6.515702409862338,2.9992036745517154);
\draw [color=qqwuqq] (6.28,3.5)-- (6.519578819910551,3.4992036745517154);
\draw [color=qqwuqq] (6.28,4.)-- (6.522061541004326,3.9992036745517154);
	\end{scope}
	\draw[black!15] (1.57075,0) -- (1.57075,4) (3.1415,0) -- (3.1415,4) 
	(4.7125,0) -- (4.7125,4) (6.283,0) -- (6.283,4);
	%
	\draw[fill] (0,1) circle (1.2pt) (3.1415,1) circle (1.2pt) (6.2831,1) 
	circle (1.2pt) ;
	\begin{scope}[line width=.5pt]
	\draw[->] (-.5,0) -- (6.8,0)node[below]{\small$\theta$};
	\draw[->] (0,-.5) -- (0,4.5)node[left]{\small$z$};
	\foreach \x/\l in 	
{1.57075/{\frac{\pi}{2}},3.1415/{\pi},4.71225/{\frac{3\pi}{2}},6.283/{2\pi}}{%
		\draw[shift={(\x,0)}] (0pt,2pt) -- (0pt,-2pt);
		\draw[shift={(\x,0)}] (0,-8pt)node[fill=white,inner sep=0pt] 
		{\small$\l$};
	}
	\foreach \y in {1}{
		\draw[shift={(0,\y)}] (2pt,0pt) -- (-2pt,0pt) node[left] {\small 
			$\y$};
	}
	\draw (0,0) node[below left] {\small$0$};
	\end{scope}
	\end{tikzpicture}
	\caption{Graphic representation of $\Omega$ and the vector field 
	$X$ for $\theta\in[0,2\pi]$.}\label{Figura1}
\end{figure}

Conversely, given a trajectory
$\varphi(t)=(\theta(t),z(t)),\;t\in(a,b),$ of $X$ and
$t_0\in(a,b)$, consider the curve
$\alpha(t)=(x(t),0,z(t)),\;t\in(a,b),$ where
$$
x(t)=\int_{t_0}^t\cos\theta(s)ds.
$$
Using \Eqref{Rot2}, \eqref{VectorField} and \eqref{Trajetoria} one
easily proves that the surface in $\mathbb R^3$ obtained by the
rotation of the image of $\alpha$ around the $x$-axis satisfies
$|A|\equiv 1$.

In the proof of Proposition \ref{monotona}, as well as in the proofs of
later results, we will use the following technical lemma. In its statement,
$\alpha(t)=(x(t),0,z(t))$ and $\theta(t)$ are as in the beginning
of this section.

\begin{lemma}\label{ThetaLinhaZero}
	For any $t\in(a,b)$, the following assertions hold:

	\begin{enumerate}
		\item[(i)] $\theta'(t)=0$ if, and only if, $z(t)=1$ and
		$\sin\theta(t)=0$.

		\item[(ii)] If $z(t)<1$ then $|z'(t)|\geq|\theta'(t)|$.

		\item[(iii)] If $\theta'(t)=0$, then there exists $\delta>0$
		such that $z(s)\geq 1$, $s\in(t-\delta,t+\delta)$.
	\end{enumerate}
\end{lemma}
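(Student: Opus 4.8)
The plan is to prove the three items in order, the first two by direct computation and the third — the substantive one — by a monotonicity‑plus‑comparison argument. For (ii) I would simply combine \eqref{Rot1}, which gives $z'(t)^2=\sin^2\theta(t)=1-\cos^2\theta(t)$, with \eqref{Rot3}, which gives $\theta'(t)^2=1-\cos^2\theta(t)/z^2(t)$; subtracting yields
$$
z'(t)^2-\theta'(t)^2=\cos^2\theta(t)\Bigl(\tfrac{1}{z^2(t)}-1\Bigr)\ge 0,
$$
the inequality holding precisely because $z(t)<1$.

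For (i), the implication $z(t)=1,\ \sin\theta(t)=0\Rightarrow\theta'(t)=0$ is immediate from \eqref{Rot3}. For the converse I would note that \eqref{Rot3} forces $|\cos\theta(s)|\le z(s)$ for all $s$, so $g(s):=z(s)-|\cos\theta(s)|$ is nonnegative; if $\theta'(t)=0$ then $z(t)=|\cos\theta(t)|$, so $g(t)=0$ and $t$ is an interior minimum of $g$. Since $z(t)=|\cos\theta(t)|>0$, $\cos\theta$ does not vanish near $t$, so $g$ is $C^1$ there and $g'(t)=0$; because $\tfrac{d}{ds}|\cos\theta|=\mp\sin\theta\,\theta'$ vanishes at $t$, this gives $\sin\theta(t)=z'(t)=0$, and then $\cos^2\theta(t)=1$ together with $z(t)=|\cos\theta(t)|$ yields $z(t)=1$.

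The heart of the lemma is (iii), and the key observation I would use is that no connected component of the open set $\{s\in(a,b):z(s)<1\}$ can be a bounded interval $(p,q)$ with $p,q\in(a,b)$: on such a component $z<1$, so by (i) $\theta'$ never vanishes and, being continuous, has constant sign; then by (ii) $|z'|\ge|\theta'|>0$, so $z'$ has constant sign and $z$ is strictly monotone on $(p,q)$, contradicting $z(p)=z(q)=1$. Now suppose (iii) fails at some $t$ with $\theta'(t)=0$; by (i), $z(t)=1$ and $\sin\theta(t)=0$, and after replacing $\theta$ by $\theta-2k\pi$ and, if needed, reflecting $\mathbb R^3$ in a plane through the axis (which preserves $|A|$ and the function $z$) we may assume $\theta(t)=0$. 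Failure of (iii) produces points tending to $t$ from one fixed side, say $s_n\uparrow t$, with $z(s_n)<1$; the component of $\{z<1\}$ through $s_n$ has right endpoint $\le t<b$, so by the key observation its left endpoint must be the endpoint $a$ of the domain, and letting $n\to\infty$ gives $z<1$ on all of $(a,t)$.

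On $(a,t)$ the same reasoning (via (i) and (ii)) shows $z$ is strictly monotone, hence strictly increasing up to $z(t)=1$; so $\sin\theta=z'>0$ near $t^-$, forcing $\theta>0$ there with $\theta\to 0^+$, and then, $\theta'$ having constant sign, $\theta'<0$ near $t^-$. Finally, $z<1$ near $t^-$ gives, via \eqref{Rot3}, $\theta'^2=1-\cos^2\theta/z^2<1-\cos^2\theta=\sin^2\theta$, so $|\theta'|<|\sin\theta|<|\theta|=\theta$; hence $\theta'+\theta>0$, so $\tfrac{d}{ds}\bigl(\theta(s)e^{s}\bigr)>0$ near $t^-$, and since $\theta(s)e^{s}\to 0$ as $s\to t^-$ we get $\theta(s)<0$ there, a contradiction. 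The case $s_n\downarrow t$ is entirely analogous, with $z$ decreasing, $\theta<0$ near $t^+$, and the integrating factor $e^{-s}$ in place of $e^{s}$. I expect the whole difficulty to sit in (iii): ruling out oscillation of $z$ about the value $1$ near $t$ is what the ``no bounded component'' remark takes care of, after which the final step is just a Gronwall‑type comparison.
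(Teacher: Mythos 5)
Your proofs of (i) and (ii) are correct and essentially identical to the paper's (the paper phrases (i) as ``$\cos^2\theta/z^2$ attains a maximum at $t$'' where you phrase it as ``$z-|\cos\theta|$ attains an interior minimum at $t$''; these are the same computation). For (iii) you follow the paper's strategy up to the midpoint and then finish differently, and both endings work. Like the paper, you argue by contradiction, show that the set $\{z<1\}$ cannot have a bounded component with both endpoints interior (the paper's version of this is the interior-minimum argument ruling out $z(c)\ge 1$ for $c\in(a,t)$), conclude $z<1$ on all of $(a,t)$, deduce $\sin\theta=z'>0$ there, and reduce to $\theta(t)=0$ with $\theta>0$, $\theta'<0$ near $t^-$. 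From there the paper integrates $z'\ge-\theta'$ to get $1-z(s_1)\ge\theta(s_1)$ and derives a contradiction from the elementary inequality $\cos(1-x)>x$ on $[0,1)$ combined with $z(s_1)>|\cos\theta(s_1)|$; you instead observe the sharper pointwise bound $|\theta'|<|\sin\theta|<\theta$ (valid since $z<1$ and $\cos\theta\ne 0$ near $t^-$), rewrite it as $(\theta e^{s})'>0$, and conclude $\theta<0$ near $t^-$ from $\theta(s)e^s\to 0$, a contradiction. Your integrating-factor ending is arguably cleaner: it avoids the auxiliary inequality $\cos(1-x)>x$ and the intermediate use of the cosine addition formula, at the cost of nothing. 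One small correction: to reduce the case $\theta(t)=(2k+1)\pi$ to $\theta(t)\equiv 0$ you need a reflection in a plane \emph{perpendicular} to the axis of revolution (i.e.\ $x\mapsto -x$, which sends $\theta\mapsto\pi-\theta$ and fixes $z$), not a plane ``through the axis,'' which fixes the profile curve and does nothing to $\theta$; alternatively the odd case can be handled directly with $\phi=(2k+1)\pi-\theta$ in place of $\theta$, exactly as the paper's ``entirely similar'' remark indicates. This does not affect the validity of the argument.
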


\begin{proof} (i) If $\theta'(t)=0$ then, by \Eqref{Rot3}, the function
	$\cos^2\theta/z^2$ attains a maximum at $t$. Hence,
	\begin{eqnarray}
	0&=&\left[\frac{\cos^2\theta}{z^2}\right]'(t)
	=\left[\frac{-2\theta'\cos\theta\sin\theta z^2-2\cos^2\theta 
		zz'}{z^4}\right](t)\nonumber\\
	&=&\frac{-2\cos^2\theta(t)z'(t)}{z^3(t)}.\nonumber
	\end{eqnarray}
	Since $\cos\theta(t)\neq 0$, one obtains from the above equality and 
	\Eqref{Rot1} that
	$$
	\sin\theta(t)=z'(t)=0.
	$$
	Using this information in \Eqref{Rot3}, one concludes that
	$z(t)=1$. The converse is an immediate consequence of
	\Eqref{Rot3}.
	
	\vskip5pt
	
	\noindent(ii) From \Eqref{Rot3} and $z(t)<1$ one obtains
	$$
	1=\theta'(t)^2+\frac{\cos^2\theta(t)}{z^2(t)}\geq\theta'(t)^2+\cos^2\theta(t),
	$$
	and so
	$$
	\theta'(t)^2\leq 1-\cos^2\theta(t)=\sin^2\theta(t)=z'(t)^2.
	$$
	The conclusion now follows by taking square roots in the above inequality.
	
	\vskip5pt
	
	\noindent(iii) Supposing, by contradiction, that the conclusion
	does not hold, we have $z(t_n)<1$ for some sequence $(t_n)$ that
	converges to $t$. Since, by (i), $t_n\neq t$ for all $n$,
	passing to a subsequence and reparametrizing $\alpha$ if necessary, one can assume that $t_n<t$, for all
	$n$.
	
	We claim that
	\begin{equation}\label{Rot3bb}
	z(s)<1,\;\;\;s\in(a,t).
	\end{equation}
	Indeed, if $z(c)\geq 1$ for some $c\in(a,t)$ then, since $t_n\to
	t$ and $z(t_n)<1$ for all $n$, there is $d\in(c,t)$ such that
	\begin{equation}\label{Rot3bc}
	z(d)=\inf\{z(s):c\leq s\leq t\}<1.
	\end{equation}
	Hence, $z'(d)=0$. On the other hand, from (i), (ii) and \Eqref{Rot3bc} 
	one 
	obtains $|z'(d)|\geq|\theta'(d)|>0$.
	This contradiction proves \Eqref{Rot3bb}.
	
	By (i), (ii) and \Eqref{Rot3bb},
	\begin{equation}\label{Rot3bbb}
	\theta'(s)\neq 0\;\;\text{and}\;\;z'(s)\neq 0,\;\;\;s\in(a,t).
	\end{equation}
	Then, by \Eqref{Rot3bb} and $z(t)=1$,
	\begin{equation}\label{Rot3c}
	\sin\theta(s)=z'(s)>0,\;\;\;s\in(a,t).
	\end{equation}
	Since $\sin\theta(t)=0$ by (i), we have two possibilities:
	
	\vskip5pt
	
	\noindent{a)} $\theta(t)=2k\pi$, for some $k\in\mathbb Z$.
	
	\vskip5pt
	
	\noindent{b)} $\theta(t)=(2k+1)\pi$, for some $k\in\mathbb Z$.
	
	\vskip5pt
	
	Assuming a), from \Eqref{Rot3c} one obtains
	$$
	\theta(t)+\pi>\theta(s)>\theta(t),\;\;\;s\in(a,t).
	$$
	Then, by the first inequality of \Eqref{Rot3bbb},
	\begin{equation}\label{Rot3d}
	\theta'(s)<0,\;\;\;s\in(a,t).
	\end{equation}	
	From \Eqref{Rot3bb}, \eqref{Rot3c}, \eqref{Rot3d} and (ii), we obtain
	$$
	z'(s)\geq -\theta'(s),\;\;\;s\in(a,t).
	$$
	Hence, for fixed $s_1\in(a,t)$, we have 
	$$
	1=z(t)=z(s_1)+\int_{s_1}^{t}z'(s)ds\geq
	z(s_1)-\int_{s_1}^{t}\theta'(s)ds=z(s_1)+\theta(s_1)-\theta(t),
	$$
	and so
	$$
	1-z(s_1)\geq\theta(s_1)-\theta(t).
	$$
	It now follows from \Eqref{Rot3d} and the fact that the cosine function 
	is decreasing on $[0,\pi]$, that
	\begin{equation}\label{Rot3f}
	\begin{aligned}
	\cos(1-z(s_1)) \leq& \ \cos(\theta(s_1)-\theta(t))\\
	=& \ \cos\theta(s_1)\cos\theta(t)+\sin\theta(s_1)\sin\theta(t)\\
	=& \ \cos\theta(s_1).
	\end{aligned}
	\end{equation}
	Since, by \Eqref{Rot3} and \eqref{Rot3d},
	$z(s_1)>|\cos\theta(s_1)|$, inequalities \Eqref{Rot3bb} and
	\eqref{Rot3f} imply
	\begin{equation}\label{Rot3g}
	\cos(1-z(s_1))<z(s_1)<1,
	\end{equation}
	contradicting the fact, easily verified, that $\cos(1-x)>x$, 
	for all $x\in[0,1)$.
	
	A reasoning entirely similar to the above shows that b) cannot
	occur either. Hence, $z(s)\geq 1$ on a
	neighbourhood of $t$.
\end{proof}

\noindent{\bf Proof of Proposition \ref{monotona}:} Suppose, by
contradiction, that $\theta$ is not monotone. Then there exists
$t_1<t_2<t_3$ in $(a,b)$ such that either i) or ii) below holds:

\vskip5pt

\noindent i) $\theta(t_1)<\theta(t_2)$ and
$\theta(t_2)>\theta(t_3)$.

\vskip5pt

\noindent ii) $\theta(t_1)>\theta(t_2)$ and
$\theta(t_2)<\theta(t_3)$.

\vskip5pt

Assuming i), we have
$$
\lambda:=\sup\{\theta(t):t\in[t_1,t_3]\}>\max\{\theta(t_1),\theta(t_3)\}.
$$
Define
$$
\xi:=\inf\{t\geq t_1:\theta(t)=\lambda\},\;\;\;\eta:=\sup\{t\leq 
t_3:\theta(t)=\lambda\}.
$$
Since $\theta$ attains a local maximum at $\xi$ and at $\eta$, we
have $\theta'(\xi)=\theta'(\eta)=0$. Then, by Lemma
\ref{ThetaLinhaZero} (i), $z(\xi)=z(\eta)=1$ and
$\sin\theta(\xi)=\sin\theta(\eta)=0$. The latter implies that
either $\theta(\xi)=\theta(\eta)=2k\pi$ or
$\theta(\xi)=\theta(\eta)=(2k+1)\pi$, for some $k\in\mathbb Z$. If
$\theta(\xi)=\theta(\eta)=2k\pi$, one has  $\mu\in (\eta,t_3]$, such that
$$
z'(t)=\sin\theta(t)<0,\;\;\;t\in(\eta,\mu).
$$
Then, $z(t)<z(\eta)=1$, for all $t\in(\eta,\mu)$, contradicting
Lemma \ref{ThetaLinhaZero} (iii).

If $\theta(\xi)=\theta(\eta)=(2k+1)\pi$, there exists $\nu\in [t_1,\xi)$
such that
$$
z'(t)=\sin\theta(t)>0,\;\;\;t\in(\nu,\xi).
$$
Then, $z(t)<z(\xi)=1$, for all  $t\in(\nu,\xi)$, which also
contradicts Lemma \ref{ThetaLinhaZero} (iii).

A reasoning entirely similar to the above shows that ii) can not
occur either. Hence, the function $\theta$ is monotone.\qed

\section{Phase portrait of the fundamental vector field}\label{phasefundvectfield}

With the aim to prove Theorem \ref{Family}, we study in this section the trajectories of the
vector field $X$ defined by \Eqref{VectorField}. This study will
be carried out through a series of technical lemmas. 

Since the trajectories
of $X$ are invariant by horizontal translations by multiples of
$2\pi$ (that is, if $\varphi(t)=(\theta(t),z(t))$ is a trajectory
of $X$ then so is the curve $\psi(t)=(\theta(t)+2n\pi,z(t))$ for
any $n\in\mathbb Z$), it is sufficient to consider the
trajectories that pass through some point
$(\theta_0,z_0)\in\Omega$ such that $0\leq\theta_0\leq 2\pi$.

\begin{lemma}\label{SuperaBarreira}
	Let $(\theta_0,z_0)$ be a point in $\Omega$ such that
	$0<\theta_0<\pi$ and $z_0\leq 1$. If
	$\varphi(t)=(\theta(t),z(t)),\;t\in(a,b),$ is the maximal integral
	curve of $X$ satisfying $\varphi(0)=(\theta_0,z_0)$, then there
	exists $c\in(0,b)$ such that $z(c)>1$.
\end{lemma}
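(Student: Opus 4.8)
The plan is to argue by contradiction: suppose $z(t)\le 1$ for all $t\in[0,b)$. The heuristic is that on the strip $0<\theta<\pi$ the second component of $X$ satisfies $z'=\sin\theta>0$, so $z$ is strictly increasing while $\theta$ stays in $(0,\pi)$; meanwhile $\theta'=\sqrt{1-\cos^2\theta/z^2}$ is bounded away from $0$ as long as $\theta$ is bounded away from $0$ and $\pi$, so $\theta$ must move. The only way $z$ can fail to exceed $1$ is if $\theta$ runs out to the boundary value $\pi$ (or down to $0$), but near $\theta=\pi$ we have $\sin\theta\to 0^+$ from above while $z$ is still increasing, so $z$ keeps growing; and part (i)–(iii) of Lemma \ref{ThetaLinhaZero} forbid $\theta'$ from vanishing while $z<1$. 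I would organize this as follows.

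First I would observe that, by Lemma \ref{ThetaLinhaZero}(i) together with the assumption $z(t)\le 1$, and since we may reparametrize so that $\theta'\ge 0$ (using Proposition \ref{monotona}), in fact $\theta'(t)>0$ and $z'(t)=\sin\theta(t)$ on the maximal interval where $z<1$; moreover by (ii) one has $|z'|\ge|\theta'|$ there. Next I would show $\theta$ stays in $(0,\pi)$ for all $t\in[0,b)$: since $\theta$ is nondecreasing, the only worry is that $\theta(t)\uparrow\pi$. If $\theta(t)\to\ell\le\pi$ as $t\to b$, then because $z$ is increasing and bounded above by $1$, $z(t)\to L\le 1$ as well; I claim $b=\infty$ is impossible and also $b<\infty$ leads to a contradiction, because the trajectory then converges to a point $(\ell,L)\in\overline\Omega$. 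If $(\ell,L)\in\Omega$, the solution extends past $b$, contradicting maximality; this forces $(\ell,L)$ to lie on $\partial\Omega$, i.e. $L=|\cos\ell|$, which combined with $L\le 1$ and $\ell\in(0,\pi]$ pins down the possibilities.

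The key computation is to rule out convergence to a boundary point. Along the trajectory, while $z<1$ and $\theta\in(0,\pi)$, I would integrate $z'=\sin\theta$ against $\theta'$: from $\dfrac{dz}{d\theta}=\dfrac{z'}{\theta'}=\dfrac{z\sin\theta}{\sqrt{z^2-\cos^2\theta}}$, and then estimate, on a subinterval where $\theta$ is bounded away from $0$, that $z$ increases by a definite amount as $\theta$ traverses a definite length — more precisely I would compare with the autonomous estimate $z'\ge \sin\theta\ge \min\{\sin\theta_0, \sin\theta\}$ and use $\theta'\le 1$ to get a lower bound $z(t)\ge z_0+\int_0^t\sin\theta(s)\,ds$ that eventually exceeds $1$ unless $\theta$ gets trapped near $0$ — but $\theta$ is nondecreasing and starts at $\theta_0>0$, so it cannot approach $0$. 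This reduces everything to the scenario $\theta(t)\uparrow\pi$; there I would note that near $\pi$ we still have $\sin\theta>0$, so $z$ is still strictly increasing, and the boundary relation $L=|\cos\pi|=1$ would force $z\to 1$; then by Lemma \ref{ThetaLinhaZero}(iii) applied at the limiting time (or a limiting argument using (i)), $z'$ cannot behave as required, giving the contradiction. The upshot is that $z$ must reach a value greater than $1$ at some finite $c\in(0,b)$.

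\smallskip

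\emph{Main obstacle.} The delicate point is the endpoint analysis: showing that the maximal trajectory, if it stays in $\{z\le 1\}$, is forced to limit onto $\partial\Omega$ at $\theta=\pi$ (the component of $X$ degenerates exactly there, $\sqrt{1-\cos^2\theta/z^2}\to\sqrt{1-1/z^2}\to 0$ as $z\to1$), and then extracting a contradiction from Lemma \ref{ThetaLinhaZero}(iii) at that limit. Handling the case $b<\infty$ versus $b=\infty$ uniformly — and making sure the "definite increase of $z$ per unit of $\theta$" estimate does not break down as $\theta\to\pi$ — is where the real care is needed; everything else is a monotonicity bookkeeping argument built on Lemma \ref{ThetaLinhaZero}.
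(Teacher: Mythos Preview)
Your overall contradiction framework and the identification of the boundary case $(\theta,z)\to(\pi,1)$ as the crux are right, and they match the paper. But the resolution you propose for that case does not work, and this is a genuine gap.

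You want to invoke Lemma \ref{ThetaLinhaZero}(iii) ``at the limiting time'' to rule out $\theta(t)\uparrow\pi$, $z(t)\uparrow 1$. That lemma, however, is a statement about an interior time $t\in(a,b)$ at which $\theta'(t)=0$. Along a trajectory of $X$ in $\Omega$ one has $\theta'(t)=\sqrt{1-\cos^2\theta/z^2}>0$ for every $t$, so the hypothesis of (iii) is never met; and the endpoint $b$ is not in the domain, so neither (i) nor (iii) applies there even in a limiting sense. Your ``definite increase of $z$'' heuristic also does not close this case: as $\theta\to\pi$ one has $\sin\theta\to 0$, so $\int_0^t\sin\theta(s)\,ds$ can very well converge to a value $\le 1-z_0$; nothing you wrote excludes this.

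The paper's argument here is quantitative, not soft. From Lemma \ref{ThetaLinhaZero}(ii) (which you did cite) one integrates $z'\ge\theta'$ to get $1\ge z_0+(\pi-\theta_0)$, so $0<\pi-\theta_0<1$. On the other hand $(\theta_0,z_0)\in\Omega$ forces $z_0>|\cos\theta_0|\ge\cos(\pi-\theta_0)$, and the elementary inequality $\cos x>1/(1+x)$ on $(0,1]$ then yields $z_0+(\pi-\theta_0)>1$, a contradiction. This numerical step is the missing idea in your plan.

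A smaller point: for the other boundary case $\theta_+\in(\pi/2,\pi)$ your sketch is too vague to count as a proof. The paper dispatches it by observing that $\xi(t):=z(t)+\cos\theta(t)>0$ tends to $0$ while $\xi'(t)\to\sin\theta_+>0$, which is impossible; this is the transversality of $X$ at $\partial\Omega$ made explicit. Finally, note that Proposition \ref{monotona} and parts (i),(iii) of Lemma \ref{ThetaLinhaZero} are results about profile curves; for trajectories of $X$ the inequality $\theta'>0$ is automatic, so only part (ii) is actually relevant here.
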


\begin{proof}
	We can assume that $z_0<1$, for otherwise the conclusion follows
	immediately from $z'(0)=\sin\theta(0)>0$. Suppose, by
	contradiction, that the conclusion does not hold. Then, by the
	definition of $\Omega$,
	\begin{equation}\label{Rot12}
	\theta_0<\theta(t)<\pi,\;\;\;t\in(0,b),
	\end{equation}
	and so
	\begin{equation}\label{Rot13}
	z'(t)=\sin\theta(t)>0\;\;\text{and}\;\;z(t)<1,\;\;\;t\in(0,b).
	\end{equation}
	Let
	\begin{equation}\label{Rot15}
	\theta_+=\lim_{t\to
		b}\theta(t)\;\;\;\text{and}\;\;\;z_+=\lim_{t\to b}z(t).
	\end{equation}
	By \Eqref{Rot12} and \eqref{Rot13}, $\frac{\pi}{2}<\theta_+\leq\pi$ and $z_+\leq
	1$. From the maximality of $\varphi$ and the fact that $X$ has no
	singularities in $\Omega$, one obtains
	$(\theta_+,z_+)\in\partial\Omega$, and so
	\begin{equation}\label{Rot16}
	z_+=|\cos\theta_+|=-\cos\theta_+.
	\end{equation}
	
	We have two cases to consider:
	
	\vskip5pt
	
	\noindent i) $\theta_+<\pi$ (and so $z_+<1$).
	
	\vskip5pt
	
	\noindent ii) $\theta_+=\pi$ (and so $z_+=1$).
	
	\vskip5pt
	
	Since the vectors of $X$ on the boundary of  $\Omega$ points inward, we 
	can use transversality to conclude that case i) can not occur.  However, 
	we will discard this case by a direct argument.
	 Consider the (positive) function
	$\xi:(0,b)\to\mathbb R$ defined by $\xi(t)=z(t)+\cos\theta(t)$. By
	\Eqref{Rot15} and \eqref{Rot16},
	\begin{equation}\label{Rot17}
	\lim_{t\to b}\xi(t)=z_++\cos\theta_+=0.
	\end{equation}
	Using now \Eqref{sistema}, \eqref{Rot15} and \eqref{Rot16}, one
	obtains
	\begin{equation*}\label{Rot18}
	\begin{aligned}
	\lim_{t\to b}\xi'(t)=&\ \lim_{t\to 
		b} \ \Bl \sin\theta(t)-\theta'(t)\sin\theta(t) \Br\\
	=&\ \sin\theta_+-\sin\theta_+\sqrt{1-\frac{\cos^2\theta_+}{z_+^2}}\\
	=&\ \sin\theta_+>0.
	\end{aligned}
	\end{equation*}
	Then, there is $t_0\in(0,b)$
	such that $\xi'(t)>\sin\theta_+/2,\;t\in[t_0,b),$ and so
	\begin{equation*}\label{Rot20}
	\xi(t)-\xi(t_0)=\int_{t_0}^t\xi'(s)ds>\frac{\sin\theta_+}{2}(t-t_0),\;\;\;
	t>t_0.
	\end{equation*}
	Letting $t\to b$ in the above inequality, and using \Eqref{Rot17},
	one obtains
	$$
	-\xi(t_0)\geq\frac{\sin\theta_+}{2}(b-t_0)>0,
	$$
	contradicting the fact that $\xi(t)>0$, for all $t$.
	
	\vskip5pt
	
	Suppose now ii). From \Eqref{Rot13} and Lemma \ref{ThetaLinhaZero}
	(ii), we obtain $z'(t)\geq\theta'(t)$ for all $t\in(0,b)$, and so
	\begin{equation*}\label{Rot21}
	1>z(t)=z(0)+\int_0^tz'(s)ds \geq 
	z(0)+\int_0^t\theta'(s)ds=z_0+\theta(t)-\theta_0,
	\end{equation*}
	for every $t\in(0,b)$. Taking the limit when $t\to b$ in the above
	inequality, and using \Eqref{Rot15} and b), we obtain
	\begin{equation}\label{Rot21a}
	1\geq z_0+\theta_+-\theta_0=z_0+\pi-\theta_0>\pi-\theta_0,
	\end{equation}
	and thus $0<\pi-\theta_0<1$. Choosing $k\in\mathbb N$ such that
	\begin{equation}\label{Rot23}
	\frac{1}{k+1}\leq\pi-\theta_0<\frac{1}{k},
	\end{equation}
	one has, since the cosine function is decreasing on $(0,\pi)$,
	\[
	0<\cos\left(\frac{1}{k}\right)<\cos(\pi-\theta_0)=-\cos\theta_0\leq|\cos\theta_0|.
	\]
	Then, by \Eqref{Rot3} and the above inequality,
	\begin{equation*}
	\frac{\cos^2(1/k)}{z_0^2}<\frac{\cos^2\theta(0)}{z^2(0)}\leq 1,
	\end{equation*}
	and so $z_0>\cos(1/k)$. Using now that $\cos x>1/(1+x)$ for every
	$x\in(0,1]$, one concludes that $z_0>k/(k+1)$. Hence, by
	\Eqref{Rot21a} and \eqref{Rot23},
	\begin{equation*}
	1\geq z_0+\pi-\theta_0>\frac{k}{k+1}+\frac{1}{k+1}=1,
	\end{equation*}
	which is obviously false. This contradiction finishes the proof of
	the lem\-ma.
\end{proof}

\vskip10pt

\begin{lemma}\label{Alcance}
	Let $(\theta_0,z_0)\in\Omega$ such that $0\leq\theta_0<\pi$. If
	$\varphi(t)=(\theta(t),z(t)),\;t\in(a,b),$ is the maximal integral
	curve of $X$ satisfying $\varphi(0)=(\theta_0,z_0)$, then there
	exists $t_0\in(0,b)$ such that $\theta(t_0)=\pi$.
\end{lemma}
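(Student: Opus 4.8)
The plan is to argue by contradiction: suppose the trajectory never reaches the vertical line $\theta=\pi$. Since on $\Omega$ the first component of $X$ is $\theta'=\sqrt{1-\cos^2\theta/z^2}\geq 0$, the function $\theta(t)$ is nondecreasing, so under the contradiction hypothesis we have $\theta_0\leq\theta(t)<\pi$ for all $t\in[0,b)$. In particular $\theta(t)$ stays in $(0,\pi)$ eventually (if $\theta_0=0$, note $\theta'$ is positive as soon as $z>|\cos\theta|$ forces movement, or invoke Lemma \ref{ThetaLinhaZero}(i) to see $\theta$ cannot remain stuck at a point where $\sin\theta=0$ unless $z\equiv 1$, which is not a trajectory). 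Set $\theta_+=\lim_{t\to b}\theta(t)\in(\theta_0,\pi]$ and $z_+=\lim_{t\to b}z(t)\in[0,+\infty]$ (the latter limit exists because, as I explain next, $z$ is eventually monotone).

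The heart of the argument is to control the behaviour of $z(t)$ as $t\to b$ and then rule out every possibility for the exit point. First I would handle the case $z_0\leq 1$: by Lemma \ref{SuperaBarreira} there is a $c$ with $z(c)>1$, so after translating time we may as well assume $z_0>1$. Now while $\theta\in(0,\pi)$ we have $z'=\sin\theta>0$, so $z$ is strictly increasing on $[0,b)$ and stays above $1$; hence $z_+\in(1,+\infty]$ and the trajectory never hits the lower boundary curve $z=|\cos\theta|$. If $b=+\infty$, then since $z$ is increasing and $\theta$ is increasing and bounded, $\theta'=z'$-type estimates via Lemma \ref{ThetaLinhaZero} are not available directly, so instead I would use $\theta'=\sqrt{1-\cos^2\theta/z^2}$ together with $z(t)\geq z(0)=z_0>1$ to get the uniform lower bound $\theta'(t)\geq\sqrt{1-1/z_0^2}=:c_0>0$ for all $t$; integrating, $\theta(t)\geq\theta_0+c_0 t\to+\infty$, contradicting $\theta<\pi$. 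Hence $b<+\infty$. But if $b<\infty$ and $z$ is increasing with $z<\infty$ not forced, we must have $z_+<+\infty$ as well, for otherwise $z'=\sin\theta$ is bounded by $1$ and $z$ cannot blow up in finite time. So $\varphi(t)\to(\theta_+,z_+)$ with $\theta_+\in(\theta_0,\pi]$ and $z_+\in(1,\infty)$; by maximality of $\varphi$ and the fact that $X$ extends smoothly past this point (since $z_+>1>|\cos\theta_+|$ puts the limit in the interior of $\Omega$), the solution would extend beyond $b$ — unless $\theta_+=\pi$, in which case the limit point still lies in $\Omega$ and again the solution extends, with $\theta$ now exceeding $\pi$, so in particular $\theta(t_0)=\pi$ for some $t_0<b$ by the intermediate value theorem, contradicting the contradiction hypothesis. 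Either way we reach a contradiction.

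The step I expect to be the main obstacle is the dichotomy on the lifespan $b$: one must carefully exclude the possibility that $z(t)\to+\infty$ in finite time (easy, since $|z'|=|\sin\theta|\leq 1$) and the possibility that $b=+\infty$ with $\theta$ converging to some $\theta_+<\pi$ without the trajectory leaving $\Omega$ — this is where the uniform lower bound $\theta'\geq\sqrt{1-1/z_0^2}$, valid precisely because $z$ stays $\geq z_0>1$, does the work and forces $\theta$ to grow linearly. Once the lifespan is pinned down as finite with a genuine interior limit point, the extension argument via maximality is routine. A secondary subtlety is the initial normalisation reducing $z_0\leq 1$ to $z_0>1$ via Lemma \ref{SuperaBarreira}, and the boundary case $\theta_0=0$, both of which are quick.
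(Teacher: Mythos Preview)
Your argument is correct and follows essentially the same route as the paper: assume $\theta<\pi$ on $[0,b)$, use Lemma~\ref{SuperaBarreira} to reach a time where $z>1$, then exploit $z'=\sin\theta>0$ and the uniform lower bound $\theta'\geq\sqrt{1-1/z(c)^2}>0$ to force $b<\infty$, and finally use $|z'|\leq 1$ to obtain a finite limit point in the interior of $\Omega$, contradicting maximality. Two minor remarks: the case $\theta_0=0$ needs no special care since $(\theta_0,z_0)\in\Omega$ already gives $z_0>1$ and $\theta'(0)>0$; and your separate treatment of $\theta_+=\pi$ is superfluous, as $z_+>1\geq|\cos\theta_+|$ places the limit in $\Omega$ regardless.
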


\begin{proof}
	Assuming, by contradiction, that the conclusion
	does not hold, one has $0<\theta(t)<\pi,\;t\in(0,b)$, and so
	\begin{equation}\label{Rot29}
	z'(t)=\sin\theta(t)>0,\;\;t\in(0,b).
	\end{equation}
	Let $c\in[0,b)$ such that $z(c)>1$ (such a number $c$ exists by
	Lemma \ref{SuperaBarreira}). Since $\theta$ is bounded above and, by \Eqref{sistema} and
	\eqref{Rot29},
	\begin{equation*}
	\theta'(t)\geq\sqrt{1-\frac{1}{z^2(t)}}>\sqrt{1-\frac{1}{z^2(c)}}>0,\;\;\;t\in(c,b),
	\end{equation*} 
	one concludes that $b<\infty$. Then, since $z'(t)\leq 1$, one also has that $z$ is bounded.
	Hence, $\varphi(t)=(\theta(t),z(t))$ converges to a point in $\Omega$ when $t\to b$, but this can not occur because $b<\infty$ (see, for instance, \cite[p.
	91]{Per}). 
\end{proof}

\begin{lemma}\label{Simetrico} Given $z_0>1$ and $n\in\mathbb Z$,
	let $\varphi(t)=(\theta(t),z(t)),\;t\in(a,b),$ be the maximal
	integral curve of $X$ satisfying $\varphi(0)=(n\pi,z_0)$. Then,
	$a=-b$ and $\varphi(-t)=R(\varphi(t))$ for every $t\in(-b,b)$,
	where $R$ denotes the reflection in $\mathbb R^2$ with respect to
	the line $\theta=n\pi$. In short, $\varphi$ is symmetric with
	respect to the line $\theta=n\pi$.
\end{lemma}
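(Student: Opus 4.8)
The plan is to exploit a reflection symmetry of the vector field $X$. Let $R:\mathbb R^2\to\mathbb R^2$ be the reflection $R(\theta,z)=(2n\pi-\theta,z)$ with respect to the line $\theta=n\pi$; note that $R$ maps $\Omega$ onto itself, since $|\cos(2n\pi-\theta)|=|\cos\theta|$, and that $R$ fixes every point of the line $\theta=n\pi$, in particular the initial point $\varphi(0)=(n\pi,z_0)$. The first step is the direct computation, using $\cos(2n\pi-\theta)=\cos\theta$ and $\sin(2n\pi-\theta)=-\sin\theta$, that
\begin{equation*}
dR_{(\theta,z)}\bigl(X(\theta,z)\bigr)=\Bigl(-\sqrt{1-\tfrac{\cos^2\theta}{z^2}},\ \sin\theta\Bigr)=-X\bigl(R(\theta,z)\bigr),\qquad (\theta,z)\in\Omega ;
\end{equation*}
in words, $X$ is reversed by the differential of $R$.

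Next I would set $\psi(t)=R(\varphi(-t))$ for $t\in(-b,-a)$. Since $\varphi(0)$ is fixed by $R$, we have $\psi(0)=\varphi(0)$, and differentiating while using the identity above gives
\begin{equation*}
\psi'(t)=-dR_{\varphi(-t)}\bigl(\varphi'(-t)\bigr)=-dR_{\varphi(-t)}\bigl(X(\varphi(-t))\bigr)=X\bigl(R(\varphi(-t))\bigr)=X(\psi(t)).
\end{equation*}
Thus $\psi$ is an integral curve of $X$ with the same initial condition as $\varphi$. Since $X$ is smooth on $\Omega$, the uniqueness theorem for ODEs yields $\psi(t)=\varphi(t)$ on the intersection of the two domains, i.e. $R(\varphi(-t))=\varphi(t)$ whenever both sides are defined.

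Finally, because $0\in(a,b)\cap(-b,-a)$, the curves $\varphi$ and $\psi$ agree on a neighbourhood of $0$ and hence glue to an integral curve of $X$ defined on $(a,b)\cup(-b,-a)$; the maximality of $\varphi$ forces this union to equal $(a,b)$, so $(-b,-a)\subseteq(a,b)$, which gives $-b\ge a$ and $-a\le b$, that is $a=-b$. Then $(-b,b)=(a,b)$ and the relation $\varphi(-t)=R(\varphi(t))$ holds on all of $(-b,b)$, as claimed. I do not expect a genuine obstacle here: the only delicate points are getting the sign right in the reflection identity for $X$ and invoking maximality correctly to pin down the symmetry of the maximal interval.
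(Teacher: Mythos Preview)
Your proof is correct and follows essentially the same approach as the paper: you define the same reflected curve $\psi(t)=R(\varphi(-t))$ (the paper calls it $\sigma$), verify it is an integral curve of $X$ with the same initial condition, and invoke maximality to conclude $a=-b$ and the symmetry relation. The only difference is that you spell out the reflection identity $dR(X)=-X\circ R$ and the maximality argument explicitly, whereas the paper leaves these as routine verifications.
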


\begin{proof}
	Consider the curve $\sigma:(-b,-a)\to\Omega$ defined by
	\begin{equation}\label{Rot24}
	\sigma(t)=R(\varphi(-t))=(2n\pi-\theta(-t),z(-t)).
	\end{equation}
	It is easy to see that $\sigma$ is an integral curve of $X$. Since
	$\sigma(0)=(n\pi,z_0)=\varphi(0)$, it follows from the maximality
	of $\varphi$ that $a=-b$ and
	$$
	\varphi(t)=\sigma(t)=R(\varphi(-t)),\;\;\;t\in(-b,b).
	$$
\end{proof}

Lemmas \ref{Alcance} and \ref{Simetrico} tell us that to
obtain a picture of the phase portrait of $X$ it is sufficient to
consider the family of trajectories
$\{\varphi_{\lambda}\}_{\lambda>1}$, where
$\varphi_{\lambda}:(-b_{\lambda},b_{\lambda})\to\Omega$ is the
maximal integral curve of $X$ such that $\varphi_
{\lambda}(0)=(\pi,\lambda)$.

From Lemma \ref{Alcance} and the fact that $z'(t)= \sin\theta(t)$ is positive 
on $\theta^{-1}((0,\pi)) $, one concludes, for each $\lambda>1$, that the 
trajectory
$\varphi_{\lambda}:(-b_{\lambda},b_{\lambda})\to\Omega$ either
crosses the ray $\{(\theta,z)\in\mathbb
R^2:\theta=0\;\text{and}\;z>1\}$ or converges to a point
$p_{\lambda}\in\partial\Omega$ when $t\to -b_{\lambda}$. Moreover,
each point $(\theta,z)\in\partial\Omega$ such that
$0\leq\theta<\pi$ is the limit point $p_{\lambda}$ of some
$\varphi_{\lambda}$. The later is clear when $\theta\not = 0$ and $\theta \not = \frac{\pi}{2}$,
as the vector field $X$ can be continuously extended, without singularities, 
to a neighbourhood of $(\theta,z)$, and follows easily for the other two 
values of $\theta$ by a continuity argument. 

The following lemma shows that two distinct
trajectories of the family $\{\varphi_{\lambda}\}_{\lambda>1}$ can
not converge to the same point in $\partial\Omega$. Note that this
fact does not follow from the standard theory of ordinary
differential equations, because the vector field $X$ does not admit a 
differentiable extension to a neighbourhood of any given point in
$\partial\Omega$.

\begin{lemma}\label{UnicLimPoint}
	With the same notation as above, assume for some
	$(\theta_0,z_0)\in\partial\Omega$ that
	\begin{equation}\label{Rot33A}
	\lim_{t\to -b_{\lambda_1}}\varphi_{\lambda_1}(t)=(\theta_0,z_0)=\lim_{t\to
		-b_{\lambda_2}}\varphi_{\lambda_2}(t).
	\end{equation}
	Then $\lambda_1=\lambda_2$ (and hence 
	$\varphi_{\lambda_1}=\varphi_{\lambda_2}$).
\end{lemma}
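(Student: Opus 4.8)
The plan is to show that if two trajectories in the family $\{\varphi_\lambda\}_{\lambda>1}$ shared the same limit point $(\theta_0,z_0)\in\partial\Omega$, one could pass to a limit/difference argument and force the $\theta$-coordinates to collide before reaching $\partial\Omega$, contradicting uniqueness in the interior of $\Omega$ (where $X$ is smooth). Concretely, parametrize both curves so that $\theta$ plays the role of the independent variable. Since on the relevant stretch $\theta'>0$ by Lemma~\ref{Alcance} (the trajectory must hit $\theta=\pi$, and $z'=\sin\theta>0$ for $\theta\in(0,\pi)$), each $\varphi_{\lambda_i}$ can be written as the graph of a function $z=f_i(\theta)$ for $\theta\in(\theta_0,\pi]$ (and near $\theta_0$ as a limit), satisfying the scalar ODE
\begin{equation*}
\frac{df_i}{d\theta}=\frac{\sin\theta}{\sqrt{1-\cos^2\theta/f_i(\theta)^2}}=:F(\theta,f_i(\theta)),
\end{equation*}
with $f_i(\theta)\to z_0$ as $\theta\to\theta_0^+$ and $f_i(\pi)=\lambda_i$. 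The hypothesis \eqref{Rot33A} says exactly that $f_1$ and $f_2$ have the same boundary value $z_0$ at $\theta=\theta_0$.

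**Main step.** The right-hand side $F(\theta,z)$ is singular on $\partial\Omega$ (where $z=|\cos\theta|$), so one cannot directly invoke a backward uniqueness theorem. The key observation is that the singularity is one-sided in a way that prevents two graphs from separating: on $\Omega$, $F(\theta,z)$ is \emph{increasing} in $z$ for fixed $\theta$ (since $z\mapsto 1-\cos^2\theta/z^2$ is increasing, hence $z\mapsto F$ is increasing), so $F$ is \emph{not} decreasing and the usual Gronwall comparison runs the wrong way; however, precisely this monotonicity means that if $f_1(\theta^*)>f_2(\theta^*)$ at some $\theta^*\in(\theta_0,\pi]$, then $f_1-f_2$ cannot return to $0$ as $\theta$ decreases — I will argue that $g:=f_1-f_2$ satisfies $g'=F(\theta,f_1)-F(\theta,f_2)\ge 0$ wherever $g\ge 0$, so $g$ is nondecreasing on the set where it is nonnegative; combined with $g(\theta)\to z_0-z_0=0$ as $\theta\to\theta_0^+$, this forces $g\equiv 0$ on all of $(\theta_0,\pi]$, whence $\lambda_1=f_1(\pi)=f_2(\pi)=\lambda_2$. (If instead $f_1<f_2$ somewhere, swap the roles.) One must check the edge cases $\theta_0=0$ and $\theta_0=\pi/2$ separately, but by the continuity/extension remarks preceding the lemma the same monotone-graph picture persists there.

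**The main obstacle** is making the limiting behavior at $\theta\to\theta_0^+$ rigorous: one must justify that each trajectory $\varphi_{\lambda_i}$ is indeed eventually a monotone graph over an interval $(\theta_0,\pi]$ with $f_i$ continuous up to and including $\theta_0$, and that $f_i(\theta)\to z_0$; this uses that $\theta'>0$ on the whole backward stretch (so there is no vertical tangent) together with \eqref{Rot33A}. A secondary subtlety is that at $\theta_0$ the derivative $f_i'$ may blow up (the graph meets $\partial\Omega$ where $\cos^2\theta/z^2=1$, so the denominator in $F$ vanishes), which is exactly why $f_i'$ is not controlled at the endpoint; but the monotonicity argument above only needs $g$ to be continuous on $[\theta_0,\pi]$ and differentiable on the open interval with $g'\ge0$ where $g\ge0$, so the endpoint blow-up is harmless. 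I expect the write-up to spend most of its effort setting up the graph reparametrization and the limit $f_i(\theta)\to z_0$ cleanly, after which the comparison argument is short.
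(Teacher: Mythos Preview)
Your overall plan---reparametrize each trajectory as a graph $z=f_i(\theta)$ over $(\theta_0,\pi]$ and compare---is exactly the paper's approach (the paper carries it out in the time parameter via the reparametrization $\psi=\theta_2^{-1}\circ\theta_1$ and a change of variables, but the content is the same monotone comparison). However, your key monotonicity claim is backwards, and with the sign you state the argument collapses.

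You write that $F(\theta,z)=\sin\theta\big/\sqrt{1-\cos^2\theta/z^2}$ is \emph{increasing} in $z$ because $z\mapsto 1-\cos^2\theta/z^2$ is increasing. But that quantity sits in the \emph{denominator}, so $F$ is \emph{decreasing} in $z$ on $\Omega$ (for $\theta\in(0,\pi)$, where $\sin\theta>0$). With your stated sign, the conclusion ``$g'\ge 0$ where $g\ge 0$ and $g(\theta_0^+)=0$, hence $g\equiv 0$'' is simply false: any nondecreasing function with $g(\theta_0^+)=0$ (e.g.\ $g(\theta)=\theta-\theta_0$) is a counterexample. So as written the proof does not go through.

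With the correct monotonicity the argument is immediate and matches the paper. Assume $\lambda_1<\lambda_2$. By uniqueness of integral curves in $\Omega$ the graphs do not cross, so $f_1<f_2$ on all of $(\theta_0,\pi]$; since $F$ is decreasing in $z$ this gives $f_1'>f_2'$ there, hence $g=f_1-f_2$ is strictly increasing on $(\theta_0,\pi]$. Therefore
\[
\lim_{\theta\to\theta_0^+}g(\theta)\le g(\pi)=\lambda_1-\lambda_2<0,
\]
contradicting $\lim_{\theta\to\theta_0^+}g(\theta)=z_0-z_0=0$. This is exactly the inequality $\lambda_2\le\lambda_1$ the paper obtains from $\psi'\le 1$ (their $\psi'\le 1$ is your $f_2'\le f_1'$ in disguise). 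So fix the sign and your sketch becomes correct and essentially identical to the paper's proof.
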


\begin{proof}
	Assume, by contradiction, that $\lambda_1\neq\lambda_2$, say
	$\lambda_1<\lambda_2$. Setting
	$\varphi_{\lambda_1}=(\theta_1,z_1)$ and
	$\varphi_{\lambda_2}=(\theta_2,z_2)$, from \Eqref{Rot33A} one
	obtains that $\theta_1=\theta_1|_{(-b_{\lambda_1},0]}$ (respectively,
	$\theta_2=\theta_2|_{(-b_{\lambda_2},0]}$) is a diffeomorphism
	from $(-b_{\lambda_1},0]$ (respectively, $(-b_{\lambda_2},0]$) to
	$(\theta_0,\pi]$. Let
	$\psi=\theta_2^{-1}\circ\theta_1:(-b_{\lambda_1},0]\to(-b_{\lambda_2},0]$.
	By the Chain Rule and the Inverse Function Theorem,
	\begin{equation}\label{Rot33B}
	\psi'(t)=(\theta_2^{-1})'(\theta_1(t))\theta_1'(t)=
	\frac{\theta_1'(t)}{\theta_2'(\psi(t))}>0,\;\;\;t\in(-b_{\lambda_1},0].
	\end{equation}
	Since $\lambda_1<\lambda_2$ and $\theta_2(\psi(t))=\theta_1(t)$
	for $t\in(-b_{\lambda_1},0]$, we have $z_2(\psi(t)) > z_1(t)$ and so
	\[
	\theta_2'(\psi(t))=\sqrt{1-\frac{\cos^2\theta_2(\psi(t))}{z_2^2(\psi(t))}}
	\geq \sqrt{1-\frac{\cos^2\theta_1(t)}{z_1^2(t)}}=\theta_1'(t),
	\]
	for all $t\in (-b_{\lambda_1},0]$. Using this inequality in 
	\Eqref{Rot33B}, we obtain
	\begin{equation}\label{Rot33C}
	\psi'(t)\leq 1,\;\;\;t\in(-b_{\lambda_1},0].
	\end{equation}
	Using again the equality $\theta_2(\psi(t))=\theta_1(t)$, it follows from
	the 
	Chan\-ge of 
	Variables Formula that
	\begin{equation*}\label{Rot33D}
	\begin{aligned}
	z_2(0)-z_2(\psi(t))=& \ \int_{\psi(t)}^0z_2'(s)ds=\int_t^0z_2'(\psi(u))
	\psi'(u)du\\
	=& \ \int_t^0\sin\theta_2(\psi(u))\psi'(u)du
	=\int_t^0\sin\theta_1(u)\psi'(u)du,
	\end{aligned}
	\end{equation*}
	for every $t\in (-b_{\lambda},0]$.
	Hence, by \Eqref{Rot33C},
	\begin{equation*}\label{Rot33E}
	\begin{aligned}
	z_2(0)-z_2(\psi(t))=& \ \int_t^0z_1'(u)\psi'(u)du\\
	\leq& \ \int_t^0z_1'(u)du =z_1(0)-z_1(t), \;\;\; 
	t\in(-b_{\lambda_1},0].
	\end{aligned}
	\end{equation*}
	Taking the limit when $t\to
	-b_{\lambda_1}$, and using \Eqref{Rot33A}, one obtains
	$\lambda_2=z_2(0)\leq z_1(0)=\lambda_1$, contradicting our
	assumption $\lambda_1<\lambda_2$. Hence $\lambda_1=\lambda_2$.
\end{proof}

\begin{figure}[htp!]
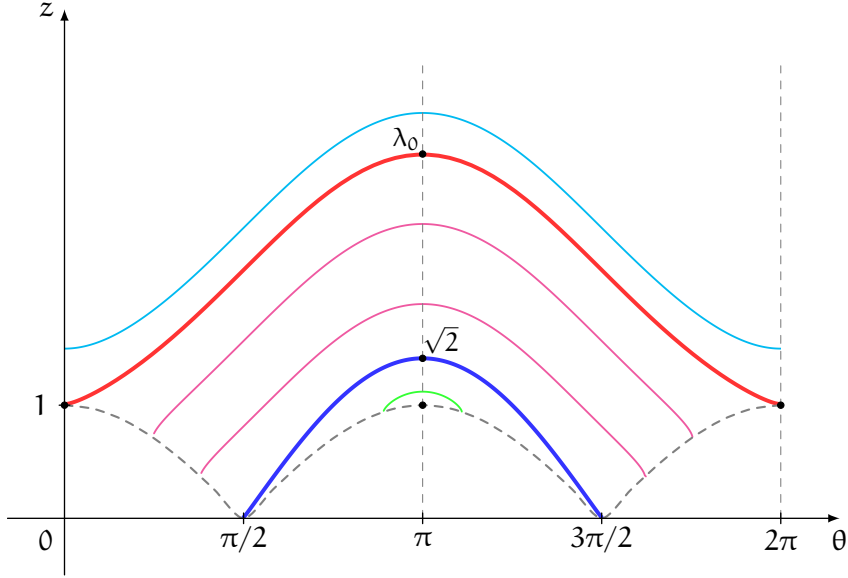

	\begin{tikzpicture}%
	[line cap=round,>=latex,x=1.5cm,y=1.5cm]
	\clip(-.5,-1.) rectangle (7,4.6);
	%
	%
	\begin{scope}[line width=.7pt,cyan!80]
	\clip(0,0) rectangle (2 * pi,4.6);
	\input{curva1}
	\end{scope}
	\begin{scope}[line width=.7pt,magenta!80]
	\clip(0,0) rectangle (2 * pi,4.6);
	\input{curva3}
	\input{curva4}
	\end{scope}
	\begin{scope}[line width=.7pt,green!80]
	\clip(0,0) rectangle (2 * pi,4.6);
	\input{curva5}
	\end{scope}

	\begin{scope}[line width=1.5pt,red!80]
	\clip(0,0) rectangle (2 * pi,4.6);
	\input{curva2}
	\end{scope}
	\begin{scope}[line width=1.5pt,blue!80]
	\clip(0,0) rectangle (2 * pi,4.6);
	\input{sphere}
	\end{scope}
	\begin{scope}[domain=0:2*pi]
	\draw[dashed,black!50,thick] plot[smooth] (\x,{abs(cos(\x r))});
	\end{scope}
	%
	\draw (3,3.25) node[fill=white,above,inner sep=0pt] {\footnotesize 
		$\lambda_0$};
	\draw (3.3,1.45) node[fill=white,above,inner sep=0pt] {\footnotesize 
		$\sqrt{2}$};
	\draw[dashed,black!50] (3.1415,0) -- (3.1415,4) (6.283,0) -- (6.283,4);
	%
	\draw[fill] (0,1) circle (1.2pt) (3.1415,1) circle (1.2pt) (6.2831,1) 
	circle (1.2pt)  (3.1415,3.22) circle (1.2pt) (3.1415,1.4142) circle 
	(1.2pt);
	\begin{scope}[line width=.5pt]
	\draw[->] (-.5,0) -- (6.8,0)node[below]{\small$\theta$};
	\draw[->] (0,-.5) -- (0,4.5)node[left]{\small$z$};
	\foreach \x/\l in 
	{1.57075/{\pi/2},3.1415/{\pi},4.71225/{3\pi/2},6.283/{2\pi}}{%
		\draw[shift={(\x,0)}] (0pt,2pt) -- (0pt,-2pt);
		\draw[shift={(\x,0)}] (0,-8pt)node[fill=white,inner sep=0pt] 
		{\small$\l$};
	}
	\foreach \y in {1}{
		\draw[shift={(0,\y)}] (2pt,0pt) -- (-2pt,0pt) node[left] {\small 
			$\y$};
	}
	\draw (0,0) node[below left] {\small$0$};
	\end{scope}
	\end{tikzpicture}
	\caption{Graphic representation of the phase portrait of the vector field 
	$X$ for 
	$\theta\in[0,2\pi]$.}\label{Figure2}
\end{figure}

\section{Proof of Theorem \ref{Family}}\label{proofmaintheorem}

As before, for each $\lambda>1$ denote by
$\varphi_{\lambda}(t)=(\theta(t),z(t)),\;t\in(-b_{\lambda},b_{\lambda}),$
the maximal integral curve of $X$ such that
$\varphi_{\lambda}(0)=(\pi,\lambda)$. From Lemma
\ref{UnicLimPoint} and the discussion that precedes its statement
one concludes that there is a unique $\lambda_0>1$ such that
$\varphi_{\lambda_0}(t)\to (0,1)$ when $t\to -b_{\lambda_0}$.
Moreover, when $\lambda\neq\lambda_0$, the trajectory $\varphi_{\lambda}$ either crosses the ray
$\{(\theta,z)\in\mathbb R^2:\theta=0\;\text{and}\;z>1\}$ or converges to a
point in $\partial\Omega$ depending on whether $\lambda>\lambda_0$ or 
$1<\lambda<\lambda_0$ (see Figure \ref{Figure2}).

For each $\lambda>1$, consider the curve
$\alpha_{\lambda}:(-b_{\lambda},b_{\lambda})\to\mathbb R^3$ defined by
$\alpha_{\lambda}(t)=(x(t),0,z(t))$, where
\begin{equation}\label{Rot59b}
x(t)=\int_0^t\cos\theta(s)ds,
\end{equation}
and the surface $M_{\lambda}$ of $\mathbb R^3$ obtained
by the rotation of the image of $\alpha_{\lambda}$ around the
$x$-axis. As we have seen in Section \ref{convexprofcurve}, the length of the 
shape
operator of $M_{\lambda}$ equals 1 at every point. The detailed classification 
of the surfaces $M_\lambda$ reads:

\begin{theorem}\label{surfaces}
	Let $M_{\lambda}$ be as above.
	
	\vskip5pt
	
	\noindent(i) If $\lambda>\lambda_0$ then $M_{\lambda}$ is a
	complete $C^{\infty}$-surface. Moreover, $M_{\lambda}$ is periodic
	and has self-intersections.
	
	\vskip5pt
	
	\noindent(ii) $M_{\lambda_0}$ is incomplete, but it can be extended in infinite many
	ways to a complete $C^3$-surface satisfying $|A|\equiv 1$. Any
	such extension has self-intersections.
	
	\vskip5pt
	
	\noindent(iii) $M_{\sqrt{2}}$ is the sphere with center at
	$(-\sqrt{2},0,0)$ and radius $\sqrt{2}$ (minus two points).
	
	\vskip5pt
	
	\noindent(iv) If $\sqrt{2}<\lambda<\lambda_0$ or
	$1<\lambda<\sqrt{2}$ then $M_{\lambda}$ is incomplete and cannot
	be extended to a surface with $|A|\equiv 1$.
\end{theorem}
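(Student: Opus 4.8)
The plan is to deduce each of (i)--(iv) from the behaviour of the trajectory $\varphi_\lambda=(\theta,z)$ of $X$ through $(\pi,\lambda)$ and of the profile curve $\alpha_\lambda$ built from it by \eqref{Rot59b}. Three inputs carry most of the weight: the trichotomy established before Lemma~\ref{UnicLimPoint} (for $\lambda>\lambda_0$ the trajectory crosses the ray $R=\{\theta=0,\ z>1\}$; for $\lambda=\lambda_0$ it tends to $(0,1)$; for $1<\lambda<\lambda_0$ it tends to some $p_\lambda\in\partial\Omega$ with $0\le\theta(p_\lambda)<\pi$); the reflection symmetry of $\varphi_\lambda$ about $\theta=\pi$ (Lemma~\ref{Simetrico}); and the extra symmetry $(\theta,z,t)\mapsto(-\theta,z,-t)$ of the system, which also carries trajectories to trajectories. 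It is convenient to write $w=1/\theta'=(1-\cos^2\theta/z^2)^{-1/2}$ for the reciprocal $\theta$-speed, so that $dx=\cos\theta\,w\,d\theta$ along a trajectory; the quantities to control are the length $2b_\lambda$ of the maximal parameter interval (which governs completeness) and the horizontal displacement over one $\theta$-period (which governs periodicity).

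\emph{Part (i) and Part (iii).} If $\lambda>\lambda_0$, the crossing point of $\varphi_\lambda$ with $R$ is $(0,z_1)$ with $z_1>1$ simply because $R\subset\{z>1\}$. By Lemma~\ref{Simetrico} (applied at $n=0$ and $n=1$) the extended trajectory is symmetric about $\theta=0$ and about $\theta=\pi$, hence invariant under the $2\pi$-translation obtained by composing the two reflections, so $\varphi_\lambda$ is $2\pi$-periodic in $\theta$; since $z'=\sin\theta$, the minimum of $z$ over one period occurs at $\theta\in2\pi\mathbb Z$, where $z=z_1$, so $z\in[z_1,\lambda]\subset(1,\lambda]$ everywhere. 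Thus $\varphi_\lambda$ stays in a region where $X$ is smooth, bounded, and bounded away from $\partial\Omega$, whence it is defined on all of $\mathbb R$ and $M_\lambda$ is a complete $C^\infty$-surface. One has $\alpha_\lambda(t+T)=\alpha_\lambda(t)+(\Delta x,0,0)$ for the period $T$, with $\Delta x=\int_0^{2\pi}\cos\theta\,w\,d\theta=2\int_0^{\pi/2}\cos v\,[w(v)-w(\pi-v)]\,dv>0$, the last inequality because $z$ is increasing on $(0,\pi)$; hence $M_\lambda$ is periodic. Finally $g(v):=x(\pi+v)-x(\pi-v)=-2\int_0^v\cos u\,w(\pi+u)\,du$ satisfies $g(v)<0$ for small $v>0$ and $g(v)\to\Delta x>0$ as $v\to\pi^-$, so $g(v^*)=0$ for some $v^*\in(0,\pi)$; as $z(\pi-v^*)=z(\pi+v^*)$ by symmetry, $\alpha_\lambda$ revisits the same point of the $xz$-plane and $M_\lambda$ has self-intersections. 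For $\lambda=\sqrt2$ the trajectory is umbilic, $\theta'\equiv1/\sqrt2$, and integrating \eqref{sistema}--\eqref{Rot59b} gives $\alpha_{\sqrt2}(t)=\sqrt2\bigl(-\sin(t/\sqrt2),\,0,\,\cos(t/\sqrt2)\bigr)$ on the interval where $\theta=\pi+t/\sqrt2$ stays in $(\pi/2,3\pi/2)$, i.e.\ $t\in(-\pi\sqrt2/2,\pi\sqrt2/2)$; this is half of the circle $x^2+z^2=2$ in $\{z>0\}$, so $M_{\sqrt2}$ is a round sphere of radius $\sqrt2$ with the two points where the profile meets the rotation axis ($z\to0$) removed.

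\emph{Parts (iv) and (ii).} If $1<\lambda<\lambda_0$ and $\lambda\ne\sqrt2$, then by Lemma~\ref{UnicLimPoint} together with (iii) the limit point $p_\lambda\in\partial\Omega$ has $\theta(p_\lambda)=\theta_1\in(0,\pi)\setminus\{\pi/2\}$, so $z(p_\lambda)=|\cos\theta_1|\in(0,1)$; since $z'\to\sin\theta_1\ne0$ the interval $(-b_\lambda,b_\lambda)$ is bounded (else $z\to\pm\infty$), so $\alpha_\lambda$ has finite length yet limits onto a circle not contained in $M_\lambda$, and $M_\lambda$ is incomplete. A $C^2$ extension of $M_\lambda$ with $|A|\equiv1$ would yield a unit-speed $C^2$ profile curve satisfying \eqref{Rot3} on an open interval containing $-b_\lambda$ with $z(-b_\lambda)=|\cos\theta_1|$; then \eqref{Rot3} forces $\theta'(-b_\lambda)=0$ and Lemma~\ref{ThetaLinhaZero}(i) forces $z(-b_\lambda)=1$, a contradiction (and symmetrically at $b_\lambda$). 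For $\lambda=\lambda_0$ one analyses \eqref{sistema} near $(0,1)$: writing $\tau=t+b_{\lambda_0}$ one gets $\theta\sim K\tau^3$ and $z-1\sim c\,\tau^4$ as $\tau\to0^+$, so $b_{\lambda_0}<\infty$ and $M_{\lambda_0}$ is incomplete, and $\alpha_{\lambda_0}$ meets the line $z=1$ with fourth-order contact at a point where $(\theta,z)=(0,1)$, which is precisely where the cylinder $z\equiv1$ (satisfying $|A|\equiv1$, since $\lambda_1=0$, $\lambda_2=-1$) is an admissible $C^3$ continuation. One then extends $M_{\lambda_0}$ to a curve defined for all arc-length $t\in\mathbb R$ by alternately splicing in cylindrical segments $z\equiv1$ of arbitrary lengths and reflected/translated copies of the arc $\varphi_{\lambda_0}$; each choice of the sequence of lengths produces a complete (unbounded in $x$, with $z\ge1$) $C^3$-surface with $|A|\equiv1$, and distinct sequences give distinct surfaces, so there are infinitely many. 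Any such surface contains $M_{\lambda_0}$, which has a full ``bump'' ($\theta$ ranging over $(0,2\pi)$, $z$ going $1\to\lambda_0\to1$), and the intermediate value argument of Part (i) applied to this bump produces a self-intersection; hence every extension self-intersects.

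The step I expect to be the main obstacle is the local analysis near $(0,1)$ in the $\lambda_0$-case: extracting the asymptotics $\theta\sim K\tau^3$, $z-1\sim c\,\tau^4$ and, more delicately, verifying that the spliced curves are genuinely $C^3$ — and generically not $C^4$ — at each junction, so that the constructed extensions are $C^3$ surfaces exactly as claimed. A secondary subtlety is the reduction in Part (iv) of ``no extension as a surface with $|A|\equiv1$'' to ``no extension of the profile curve'': one must argue that a $C^2$ surface with $|A|\equiv1$ containing $M_\lambda$ is, near the limiting axis-circle, again a surface of revolution about the same axis, so that \eqref{Rot3} and Lemma~\ref{ThetaLinhaZero}(i) apply. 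The sign computations for $\Delta x$ and $g$ in Part (i) are then routine, given the symmetry of $z$ about $\theta=\pi$ and its monotonicity on $(0,\pi)$.
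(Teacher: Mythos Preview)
Your approach is essentially the paper's: the paper also reduces Theorem~\ref{surfaces} to the corresponding statement about the profile curves $\alpha_\lambda$ (stated separately as Theorem~\ref{curves}) and proceeds case by case using the same trichotomy and the reflection symmetries of Lemma~\ref{Simetrico}. Your treatment of (i) is in fact slightly cleaner: working in the $\theta$-parametrization, the inequality $w(v)>w(\pi-v)$ (from monotonicity of $z$ on $(0,\pi)$) gives $\Delta x>0$ in one line, where the paper obtains the same comparison through an auxiliary diffeomorphism $\xi(t)=\theta^{-1}(\pi-\theta(t))$ in the $t$-variable; your intermediate-value argument with $g(v)$ is equivalent to the paper's location of $t_2$ with $x(\pm t_2)=0$. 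For (ii), you correctly identify the asymptotics at $(0,1)$ as the crux but do not derive them; the paper first proves $b_{\lambda_0}<\infty$ via the integrability estimate $(z-\cos\theta)^{-1/2}\le C_1\sin^{-3/4}\theta$ (so that $t_1-t\le C\int_{\theta(t)}^{\pi/2}\sin^{-3/4}u\,du$ is bounded), and then computes $\lim_{t\to-b_{\lambda_0}}\theta''=0$ and $\lim_{t\to-b_{\lambda_0}}\theta'''=1/3$ by repeated L'H\^opital, which yields your claimed $\theta\sim\tau^3/18$, $z-1\sim\tau^4/72$ and the $C^3$ (generically not $C^4$) regularity of the spliced extensions. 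Two small remarks: in (iii) your integration of \eqref{Rot59b} places the center at the origin, which is what $x(0)=0$ forces, so the stated center $(-\sqrt2,0,0)$ appears to be a slip in the paper; and your ``secondary subtlety'' in (iv) is not addressed by the paper either --- non-extendability is proved only for the profile curve, i.e.\ within the rotational class.
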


Concerning the Gaussian curvature of the surfaces obtained in the above theorem, we observe that
the only surfaces with positive Gaussian curvature are the surfaces $M_{\lambda}$ 
with $1<\lambda\leq \sqrt{2}$. For all the others, the Gaussian curvature 
changes the signal. 

	\vskip10pt

Theorem \ref{surfaces} is a direct consequence of the following result:
\begin{theorem}\label{curves}
	Let $\alpha_{\lambda}:(-b_{\lambda},b_{\lambda})\to\mathbb R^3$ be as 
	above.
	
	\vskip5pt
	
	\noindent(i) If $\lambda>\lambda_0$ then $b_{\lambda}=+\infty$ and
	$\alpha_{\lambda}$ is of class $C^{\infty}$. Moreover,
	$\alpha_{\lambda}$ is periodic and has self-intersections.
	
	\vskip5pt
	
	\noindent(ii) $b_{\lambda_0}<+\infty$ and $\alpha_{\lambda_0}$ can
	be extended in infinite many ways to a profile curve of class
	$C^3$ defined on $\mathbb R$. Any such extension has
	self-intersections.
	
	\vskip5pt
	
	\noindent(iii) $\alpha_{\sqrt{2}}$ is a parametrization by arc
	length of the semicircle in the $xz$-plane with center at 
	$(-\sqrt{2},0,0)$ 
	and
	radius $\sqrt{2}$.
	
	\vskip5pt
	
	\noindent(iv) If $\sqrt{2}<\lambda<\lambda_0$ or
	$1<\lambda<\sqrt{2}$ then $b_{\lambda}<+\infty$ and
	$\alpha_{\lambda}$ cannot be extended to a profile curve defined
	on an open interval containing $(-b_{\lambda},b_{\lambda})$ properly.
\end{theorem}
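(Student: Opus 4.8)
The plan is to analyse the curve $\varphi_\lambda=(\theta,z)$ through $(\pi,\lambda)$ case by case according to the position of $\lambda$ relative to $\sqrt2$ and $\lambda_0$, using repeatedly the symmetry of $\varphi_\lambda$ in the vertical lines $\theta=n\pi$ (Lemma \ref{Simetrico}), the pointwise relations of Lemma \ref{ThetaLinhaZero}, and the qualitative description of the trajectories established before and after Lemma \ref{UnicLimPoint}. As preliminary bookkeeping I would record that, with the normalisation $\theta'\ge 0$, one has $z'=\sin\theta$, that $\theta'(t)>0$ except where $z=1$ and $\sin\theta=0$, and that $z$ attains its maximum $\lambda$ at $t=0$ (since $\sin\theta<0$ on $(\pi,2\pi)$ and $>0$ on $(0,\pi)$). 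I would dispatch \textbf{(iii)} first: the arc-length parametrisation of the circle of radius $\sqrt2$ with horizontal tangent at its top has tangent angle $\theta_\gamma(t)=\pi+t/\sqrt2$ and height $z_\gamma(t)=\sqrt2\cos(t/\sqrt2)$, and a one-line substitution verifies that $(\theta_\gamma,z_\gamma)$ solves \eqref{sistema} with value $(\pi,\sqrt2)\in\Omega$ at $t=0$; by uniqueness of integral curves of $X$ in $\Omega$ it equals $\varphi_{\sqrt2}$, its maximal $\Omega$-interval is $t/\sqrt2\in(-\pi/2,\pi/2)$, so $b_{\sqrt2}=\pi/\sqrt2<\infty$, $\alpha_{\sqrt2}$ is an arc-length parametrisation of the corresponding semicircle, and it cannot be extended because $z\to0$ (the rotation axis) at the ends. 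In particular $\varphi_{\sqrt2}$ converges to $(\pi/2,0)\in\partial\Omega$, which forces $\sqrt2<\lambda_0$.

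For \textbf{(iv)}, let $1<\lambda<\lambda_0$ with $\lambda\neq\sqrt2$. By the dichotomy recalled before Theorem \ref{surfaces}, $\varphi_\lambda$ converges as $t\to-b_\lambda$ to a point $(\theta_0,z_0)\in\partial\Omega$; since $\theta$ strictly decreases from $\pi$ backwards and cannot return to $0$ (that limit belongs to $\lambda_0$) we get $\theta_0\in(0,\pi)$, and since $(\pi/2,0)$ is already the limit of $\varphi_{\sqrt2}$, Lemma \ref{UnicLimPoint} gives $(\theta_0,z_0)\neq(\pi/2,0)$, hence $z_0=|\cos\theta_0|\in(0,1)$. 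By Lemma \ref{Simetrico} the forward limit is $(2\pi-\theta_0,z_0)\in\partial\Omega$, near which $z'=\sin\theta$ stays bounded away from $0$, so $z$ reaches $z_0$ from $\lambda$ in finite time and $b_\lambda<\infty$. Finally, if $\alpha_\lambda$ extended to a $C^2$ profile curve $\tilde\alpha$ on a strictly larger interval, then $\tilde\theta$ would be $C^1$ and $\tilde\theta'(b_\lambda)=\lim_{t\to b_\lambda}\sqrt{1-\cos^2\theta/z^2}=0$, so Lemma \ref{ThetaLinhaZero}(i) would give $\tilde z(b_\lambda)=1$, contradicting $\tilde z(b_\lambda)=z_0<1$; hence no proper extension exists.

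For \textbf{(i)}, let $\lambda>\lambda_0$. By hypothesis $\varphi_\lambda$ crosses $\{\theta=0,\ z>1\}$ backwards, so by the $\theta=\pi$ symmetry it reaches a point $(2\pi,z_1)$, $z_1>1$, in forward time; applying Lemma \ref{Simetrico} there too and composing the two reflections shows $\varphi_\lambda(t+T)=(\theta(t)+2\pi,z(t))$ for a suitable $T>0$. Thus $z$ is $T$-periodic with $\min z=z_1>1\ge|\cos\theta|$, so $\varphi_\lambda$ stays in a compact part of the interior of $\Omega$; hence it is global ($b_\lambda=\infty$) and, being a solution of a smooth vector field, $C^\infty$, while $\alpha_\lambda$ is a $C^\infty$ regular curve since $\|\alpha_\lambda'\|\equiv1$. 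Moreover $x(t+T)-x(t)=\int_0^T\cos\theta\,ds=:\Delta x$; changing variables to $\theta$ and using that $z$, as a function of $\theta$, is $2\pi$-periodic, symmetric about $\theta=\pi$, and strictly increasing on $[0,\pi]$, one reduces $\Delta x$ to $2\int_0^{\pi/2}\cos\phi\big(\tfrac1{\theta'(\phi)}-\tfrac1{\theta'(\pi-\phi)}\big)\,d\phi$, which is $>0$ because $z(\phi)<z(\pi-\phi)$ forces $\theta'(\phi)<\theta'(\pi-\phi)$; so $\alpha_\lambda(t+T)=\alpha_\lambda(t)+(\Delta x,0,0)$, i.e. $\alpha_\lambda$ is periodic and $M_\lambda$ is invariant under the axial translation by $\Delta x$. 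For self-intersection, $\cos\theta$ changes sign over a period, so $x$ is not monotone and $\max_{[0,T]}x>x(T)=\Delta x$; hence $x=\Delta x$ at some $\tilde t\in(0,T)$ with $\tilde t\neq T$, and using $z(-t)=z(t)$ and $x(-t)=-x(t)$ one checks $\alpha_\lambda(\tilde t)=\alpha_\lambda(2T-\tilde t)$ with $\tilde t\neq2T-\tilde t$.

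Case \textbf{(ii)}, $\lambda=\lambda_0$, is the crux, because $X$ extends continuously to $\overline{\Omega}$ with a \emph{degenerate zero} at $(0,1)$ — the constant trajectory there being the radius-$1$ cylinder — so neither uniqueness nor finite-time exit is automatic. Here $\varphi_{\lambda_0}\to(0,1)$ backwards with $z\downarrow1$, $z>1$. Writing $p=z^2-1>0$ and using $\frac{dp}{d\theta}=\frac{2z^2\sin\theta}{\sqrt{p+\sin^2\theta}}$, I would first bootstrap $p\ge\sin^2\theta$ for small $\theta$ and then integrate to get $p\ge c_1\theta^{4/3}$; consequently $\theta'=\frac{\sqrt{p+\sin^2\theta}}{z}\ge c_2\theta^{2/3}$, so $\int_0 d\theta/\theta'<\infty$ and $b_{\lambda_0}<\infty$. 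The same (now two-sided) asymptotics give $\theta\sim c(x-x_0)^3$ and $z-1\sim c'(x-x_0)^4$ near each endpoint, where the tangent is horizontal; combined with $\frac{dz}{dx}=\tan\theta$, $\frac{d^2z}{dx^2}=\frac{\theta'}{\cos^3\theta}$, $\frac{d^3z}{dx^3}=\frac{\theta''\cos\theta+3\theta'^2\sin\theta}{\cos^5\theta}$ and $\theta,\theta',\theta''\to0$ (the last from $\theta''=\frac{\cos\theta\sin\theta}{z^2}+\frac{\cos^2\theta\sin\theta}{z^3\theta'}$ and $\theta/\theta'\to0$), this shows $\alpha_{\lambda_0}$ extends to a $C^3$ — but not $C^4$, since $\theta'''$ and hence $\frac{d^4z}{dx^4}$ tend to a nonzero limit — curve across each endpoint. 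One may therefore splice $\alpha_{\lambda_0}$, at either end, onto a segment of the cylinder $\{z\equiv1\}$ and, after an arbitrary length, onto a reflected translate of $\alpha_{\lambda_0}$, iterating; the lengths of the cylindrical pieces being arbitrary yields infinitely many $C^3$ profile curves on $\mathbb R$. Each contains a copy of $\alpha_{\lambda_0}$, which already self-intersects by the argument of (i) (now $z(-t)=z(t)$ and $x$ is non-monotone on $(-b_{\lambda_0},b_{\lambda_0})$ since $\cos\theta$ changes sign as $\theta$ runs over $(0,2\pi)$), so every extension self-intersects. \emph{This case is the main obstacle}: because $(0,1)$ is a non-Lipschitz, degenerate boundary point, both $b_{\lambda_0}<\infty$ and the precise $C^3$/not-$C^4$ regularity rest on the quantitative estimates $p\sim\theta^{4/3}$, $\theta\sim(x-x_0)^3$, which have to be squeezed out of the singular ODE by comparison rather than read off from the general theory.
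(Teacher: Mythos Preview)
Your proposal is essentially correct and reaches the same conclusions as the paper, but several of the technical steps are carried out by genuinely different means, and two points deserve tightening.

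\textbf{Where you and the paper diverge.} For (i), the paper establishes the self-intersection \emph{inside one half-period}: it shows $x(-t_1)>0>x(-t_0)$ (with $\theta(-t_1)=\pi/2$) via the comparison $\theta'(\xi(t))>\theta'(t)$, finds $t_2\in(t_1,t_0)$ with $x(-t_2)=0$, and then uses $x(t)=-x(-t)$, $z(t)=z(-t)$ to get $\alpha(t_2)=\alpha(-t_2)$. Your route instead proves $\Delta x>0$ by the change of variables to $\theta$ and the monotonicity of $z(\theta)$ on $[0,\pi]$, and then produces a coincidence $\alpha(\tilde t)=\alpha(2T-\tilde t)$ using the translation periodicity; this is a nice alternative and, incidentally, yields the sign of the period shift, which the paper never states. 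For the crux (ii), the paper obtains $b_{\lambda_0}<\infty$ by successive L'H\^opital computations showing $\sin^{3/2}\theta/(z-\cos\theta)\to 0$ and then integrating $1/\theta'$; you replace this by the differential inequality for $p=z^2-1$, bootstrap $p\ge\sin^2\theta$, and integrate to $p\asymp\theta^{4/3}$, whence $\theta'\gtrsim\theta^{2/3}$ is integrable. Both arguments are valid; yours is more self-contained ODE-comparison, the paper's is more computational. For the regularity of the extension, the paper works with $\theta(t)$ and computes $\theta',\theta'',\theta'''\to 0,0,1/3$; you instead compute $d^kz/dx^k\to 0$ for $k\le 3$, which is equivalent near the endpoint since $dx/dt=\cos\theta\to 1$.

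\textbf{Two places to tighten.} First, in (ii) you write that $\alpha_{\lambda_0}$ ``already self-intersects by the argument of~(i)''. But your argument in~(i) uses the point $2T-\tilde t$, i.e.\ the translation period, which does not exist for $\lambda=\lambda_0$. What you actually need is the paper's version of the argument (or your $\Delta x$ computation adapted): the integral $\int_0^{2\pi}\frac{\cos\theta}{\theta'}\,d\theta=2\int_0^{\pi/2}\cos\phi\bigl(\tfrac{1}{\theta'(\phi)}-\tfrac{1}{\theta'(\pi-\phi)}\bigr)\,d\phi>0$ remains finite and positive for $\lambda_0$ (your estimate $\theta'\gtrsim\theta^{2/3}$ makes the singularity integrable), hence $x(b_{\lambda_0}^-)>0$; since $x(0)=0$ and $x'(0)=-1$, there is $\tilde t\in(0,b_{\lambda_0})$ with $x(\tilde t)=0$, and then $\alpha(\tilde t)=\alpha(-\tilde t)$ by the even/odd symmetry. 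You should say this explicitly rather than pointing to~(i). Second, your claim that ``$\theta'''$ tends to a nonzero limit'' is not a consequence of the two-sided bounds $p\asymp\theta^{4/3}$ alone; those give only $c_1(t+b_{\lambda_0})^3\le\theta\le c_2(t+b_{\lambda_0})^3$, which shows $\limsup\theta'''>0$ but not that the limit exists. The paper gets the exact value $\theta'''\to 1/3$ by a further L'H\^opital step. Fortunately this is immaterial for the theorem as stated (only $C^3$ is asserted), and your lower bound already suffices to see that splicing in a cylinder segment cannot be $C^4$.
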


\begin{figure}
	\begin{subfigure}[b]{0.4\linewidth}
		\centering
		\begin{tikzpicture}
		\coordinate (0) at (0,0);
		\coordinate (x) at (4,0);
		\coordinate (y) at (0,3);
		\draw[->] (0)--(x)node[below]{$x$};
		\draw[->] (0)--(y)node[left]{$z$};
		\node[anchor=south west] at (-.15,1.5) 
		{\includegraphics[scale=0.8]{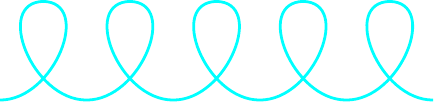}};
		\draw[dashed] (0,1)node[left]{$1$}--(4,1);
		\node[left] at (0,0) {\phantom{$\sqrt{2}$}};
		\end{tikzpicture}
		\caption{$\alpha_{\lambda}$ with $\lambda>\lambda_0$ }
		\vspace{4ex}
	\end{subfigure}
	
	\begin{subfigure}[b]{0.4\linewidth}
		\centering
		\begin{tikzpicture}
		\coordinate (0) at (0,0);
		\coordinate (x) at (4,0);
		\coordinate (y) at (0,3);
		\draw[->] (0)--(x)node[below]{$x$};
		\draw[->] (0)--(y)node[left]{$z$};
		\node[anchor=south west] at (-.15,.85) 
		{\includegraphics[scale=0.8]{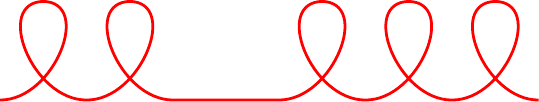}};
		\draw[dashed] (0,1)node[left]{$1$}--(4,1);
		\node[left] at (0,0) {\phantom{$\sqrt{2}$}};
		\end{tikzpicture}
		\caption{$\alpha_{\lambda}$ with $\lambda=\lambda_0$}
		\label{Figb}
		\vspace{4ex}
	\end{subfigure}
	\begin{subfigure}[b]{0.4\linewidth}
		\centering
		\begin{tikzpicture}
		\coordinate (0) at (0,0);
		\coordinate (x) at (4,0);
		\coordinate (y) at (0,3);
		\draw[->] (0)--(x)node[below]{$x$};
		\draw[->] (0)--(y)node[left]{$z$};
		\node[anchor=south west] at (-.15,.85) 
		{\includegraphics[scale=0.8]{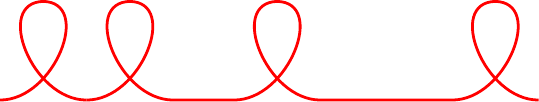}};
		\draw[dashed] (0,1)node[left]{$1$}--(4,1);
		\node[left] at (0,0) {\phantom{$\sqrt{2}$}};
		\end{tikzpicture}
		\caption{$\alpha_{\lambda}$ with $\lambda=\lambda_0$}
		\vspace{4ex}
	\end{subfigure}
	\begin{subfigure}[b]{0.4\linewidth}
		\centering
		\begin{tikzpicture}
		\coordinate (0) at (0,0);
		\coordinate (x) at (4,0);
		\coordinate (y) at (0,3);
		\draw[->] (0)--(x)node[below]{$x$};
		\draw[->] (0)--(y)node[left]{$z$};
		\node[anchor=south west] at (-.15,.85) 
		{\includegraphics[scale=0.8]{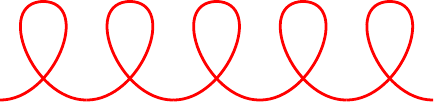}};
		\draw[dashed] (0,1)node[left]{$1$}--(4,1);
		\node[left] at (0,0) {\phantom{$\sqrt{2}$}};
		\end{tikzpicture}
		\caption{$\alpha_{\lambda}$ with $\lambda=\lambda_0$}
		\vspace{4ex}
	\end{subfigure}
	\begin{subfigure}[b]{0.4\linewidth}
		\centering
		\begin{tikzpicture}
		\coordinate (0) at (0,0);
		\coordinate (x) at (4,0);
		\coordinate (y) at (0,3);
		\draw[->] (0)--(x)node[below]{$x$};
		\draw[->] (0)--(y)node[left]{$z$};
		\node[anchor=center] at (2,1.1) 
		{\includegraphics[scale=1]{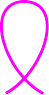}};
		\draw[dashed] (0,1)node[left]{$1$}--(4,1);
		\draw[dashed] 
		(0,1.4142)node[left]{$\sqrt{2}$}--(4,1.4142);
		\end{tikzpicture}
		\caption{$\alpha_\lambda$ with 
			$\sqrt{2}<\lambda<\lambda_0$}
		\vspace{4ex}
	\end{subfigure}
	\begin{subfigure}[b]{0.4\linewidth}
		\centering
		\begin{tikzpicture}
		\coordinate (0) at (0,0);
		\coordinate (x) at (4,0);
		\coordinate (y) at (0,3);
		\draw[->] (0)--(x)node[below]{$x$};
		\draw[->] (0)--(y)node[left]{$z$};
		\node[anchor=center] at (2,1.2) 
		{\includegraphics[scale=1]{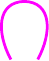}};
		\draw[dashed] (0,1)node[left]{$1$}--(4,1);
		\draw[dashed] 
		(0,1.4142)node[left]{$\sqrt{2}$}--(4,1.4142);
		\end{tikzpicture}
		\caption{$\alpha_\lambda$ with 
			$\sqrt{2}<\lambda<\lambda_0$}
		\vspace{4ex}
	\end{subfigure}
	\begin{subfigure}[b]{0.4\linewidth}
		\centering
		\begin{tikzpicture}
		\coordinate (0) at (0,0);
		\coordinate (x) at (4,0);
		\coordinate (y) at (0,3);
		\draw[->] (0)--(x)node[below]{$x$};
		\draw[->] (0)--(y)node[left]{$z$};
		\node[anchor=north] at (2,1.22) 
		{\includegraphics[scale=01]{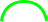}};
		\draw[dashed] (0,1)node[left]{$1$}--(4,1);
		\draw[dashed] 
		(0,1.4142)node[left]{$\sqrt{2}$}--(4,1.4142);
		\end{tikzpicture}
		\caption{$\alpha_\lambda$ with $1<\lambda<\sqrt{2}$}
		\vspace{4ex}
	\end{subfigure}
	\caption{Graphic representation of the curves $a_\lambda$}\label{Figure3}
\end{figure}

\newpage

\begin{proof} 
(i) From
$\lambda>\lambda_0$ and the discussion in the beginning of this
section one infers that there exists $t_0>0$ such that
$\theta(-t_0)=0$. Applying Lemma \ref{Simetrico} with $n=0$ and
$n=1$ one concludes that $b_{\lambda}=+\infty$. Being the
trajectory of a vector field of class $C^{\infty}$,
$\varphi_{\lambda}$, and hence $\alpha_{\lambda}$, is of class
$C^{\infty}$.

We will now prove that $\alpha_{\lambda}$ is periodic. Since, by
\Eqref{VectorField} and Lemma \ref{Simetrico}, the maps
$t\in\mathbb R\mapsto\varphi_{\lambda}(t+2t_0)$ and $t\in\mathbb
R\mapsto(\theta(t)+2\pi,z(t))$ are both trajectories of $X$
passing through $(3\pi,\lambda)$, one has
\begin{equation}\label{Rot30}
\varphi_{\lambda}(t+2t_0)=\big(\theta(t)+2\pi,z(t)\big),\;\;\;t\in\mathbb
R.
\end{equation}
On the other hand, by \Eqref{Rot59b} one has
\begin{equation}\label{Rot48b}
\begin{aligned}
x(t+2t_0)=&\ \int_{0}^{t+2t_0}\cos\theta(s)ds\\
=&\int_{0}^{t}\cos\theta(s)ds
+\int_{t}^{t+2t_0}\cos\theta(s)ds\\
=&\ x(t)+\int_{t}^{2t_0}\cos\theta(s)ds+\int_{2t_0}^{2t_0+t}\cos\theta(s)ds\\
=&\ x(t)+\int_{t}^{2t_0}\cos\theta(s)ds+\int_{0}^{t}\cos\theta(s+2t_0)ds.
\end{aligned}
\end{equation}
Since $\theta(s+2t_0)=\theta(s)+2\pi$ by \Eqref{Rot30}, it follows
that
\begin{equation}\label{Rot48c}
\begin{aligned}
x(t+2t_0)=&\ x(t)+\int_{t}^{2t_0}\cos\theta(s)ds
+\int_{0}^{t}\cos\theta(s)ds\\
=&\ x(t)+\int_{0}^{2t_0}\cos\theta(s)ds=\ x(t)+x(2t_0),\;\;\;t\in\mathbb R.
\end{aligned}
\end{equation}
Since $z(t+2t_0)=z(t)$ for all $t\in\mathbb R$ by \Eqref{Rot30},
the curve $\alpha_{\lambda}$ is periodic.

To complete the proof of (i), it remains to show that
$\alpha_{\lambda}$ is non-embedded. In fact, we will show that the
restriction of $\alpha_{\lambda}$ to the interval $(-t_0,t_0)$ has
already self-intersections. For that observe first that, since
$\theta(-t_0)=0$, $\theta(0)=\pi$ and $\theta'>0$, the function
$\theta=\theta|_{[-t_0,0]}$ is a diffeomorphism from $[-t_0,0]$ to
$[0,\pi]$. In particular, there exists a unique $t_1\in(0,t_0)$
such that $\theta(-t_1)=\pi/2$. Let $\xi:[-t_0,-t_1]\to[-t_1,0]$ be
defined by
\begin{equation}\label{Rot35}
\xi(t)=\theta^{-1}(\pi-\theta(t)).
\end{equation}
Clearly, $\xi$ is a diffeomorphism, $\xi(-t_0)=0$ and
$\xi(-t_1)=-t_1$. Moreover,
\begin{equation}\label{Rot36}
\xi'(t)=-\frac{\theta'(t)}{\theta'(\xi(t))}\,,\;\;\;t\in[-t_0,-t_1],
\end{equation}
and
\begin{equation}\label{Rot36a}
\cos\theta(\xi(t))=\cos(\pi-\theta(t))=-\cos\theta(t),\;\;\;t\in[-t_0,-t_1].
\end{equation}
By \Eqref{sistema} and \eqref{Rot36a},
\begin{equation}\label{Rot37}
\theta'(\xi(t))=\sqrt{1-\frac{\cos^2\theta(\xi(t))}{z^2(\xi(t))}}
=\sqrt{1-\frac{\cos^2\theta(t)}{z^2(\xi(t))}}.
\end{equation}
Since, by \Eqref{sistema},
\begin{equation*}
z(\xi(t))-z(t)=\int_t^{\xi(t)}z'(s)ds
=\int_t^{\xi(t)}\sin\theta(s)ds>0,\;\;\;t\in[-t_0,-t_1),
\end{equation*}
one then has
\begin{equation}\label{Rot39}
\theta'(\xi(t))>\sqrt{1-\frac{\cos^2\theta(t)}{z^2(t)}}
=\theta'(t),\;\;\;t\in[-t_0,-t_1).
\end{equation}

Using the informations collected above, we will now compare the
values of $x(t)$ for $t=-t_0$, $t=-t_1$ and $t=0$. Since
$\pi/2\leq\theta(t)\leq\pi$ for $t\in[-t_1,0]$, from \Eqref{Rot59b}
we obtain
\begin{equation}\label{Rot40a}
x(-t_1)=\int_0^{-t_1}\cos\theta(s)ds=-\int_{-t_1}^{0}\cos\theta(s)ds>0=x(0).
\end{equation}
On the other hand, by \Eqref{Rot36}, \eqref{Rot36a} and \eqref{Rot39}, one has
\begin{equation}\label{Rot41}
\begin{aligned}
\int_{-t_1}^0\cos\theta(t)dt =& \ \int_{-t_1}^{-t_0}\cos\theta(\xi(s))\xi'(s)ds
=\int_{-t_0}^{-t_1}\frac{\cos\theta(\xi(s))\theta'(s)}{\theta'(\xi(s))}ds\\
>&\ \int_{-t_0}^{-t_1}\cos\theta(\xi(s))ds=-\int_{-t_0}^{-t_1}\cos\theta(s)ds.
\end{aligned}
\end{equation}
Hence, by \Eqref{Rot59b} and inequality above,
\begin{equation}\label{Rot42}
x(-t_0)=-\int_{-t_0}^0\cos\theta(t)dt
=-\int_{-t_0}^{-t_1}\cos\theta(t)dt-\int_{-t_1}^0\cos\theta(t)dt<0.
\end{equation}

The curve $\alpha_{\lambda}$ is symmetric with respect to the line
$x=x(0)=0$. Indeed, by Lemma \ref{Simetrico} one has
\begin{equation}\label{Rot43}
\theta(t)=2\pi-\theta(-t),\;\;z(t)=z(-t),\;\;\;t>0,
\end{equation}
and so
\begin{equation}\label{Rot44}
\begin{aligned}
x(t)=&\ \int_{0}^{t}\cos\theta(s)ds=\int_{0}^{t}\cos\theta(-s)ds\\
=&\ -\int_{0}^{-t}\cos\theta(s)ds=-x(-t),
\end{aligned}
\end{equation}
for every $t>0$.

Since $x'>0$ on $(-t_0,-t_1)$ and, by \Eqref{Rot40a} and
\eqref{Rot42}, $x(-t_0)<0<x(-t_1)$, there exists a unique
$t_2\in(t_1,t_0)$ such that $x(-t_2)=0=x(0)$. Then, by \Eqref{Rot44},
\begin{equation}\label{Rot48}
x(t_2)=-x(-t_2)=0=x(-t_2).
\end{equation}
Since $z(t_2)=z(-t_2)$ by \Eqref{Rot43}, it follows that
$\alpha_{\lambda}(t_2)=\alpha_{\lambda}(-t_2)$. Hence, the
restriction of $\alpha_{\lambda}$ to the interval $(-t_0,t_0)$ has
a self-intersec-tion.

\vskip10pt

\noindent(ii) We begin by showing that $b_{\lambda_0}<+\infty$. 
Let $t_1\in(-b_{\lambda_0},0)$ such that $\theta(t_1)=\pi/2$. For
every $t\in(-b_{\lambda_0},t_1]$ we have
\begin{equation}\label{Finito7}
\begin{aligned}
t_1-t=& \ \int_t^{t_1}1ds=\int_{\theta(t)}^{\pi/2}(\theta^{-1})'(u)du
=\int_{\theta(t)}^{\pi/2}\frac{1}{\theta'(\theta^{-1}(u))}du\\
=& \ 
\int_{\theta(t)}^{\pi/2}\frac{z}{\sqrt{z^2-\cos^2\theta}}
(\theta^{-1}(u))du\\
\leq&
\lambda_0\int_{\theta(t)}^{\pi/2}\frac{1}{\sqrt{z-\cos\theta}}
(\theta^{-1}(u))\,du,
\end{aligned}
\end{equation}
where in the last inequality we used the fact that
$1<z\circ\theta^{-1}<\lambda_0$ and the cosine function is nonnegative on $(0,\pi/2]$.

\begin{claim}
There is $C_1>0$ such that
\begin{equation}\label{Finito6}
\frac{1}{\sqrt{z(t)-\cos\theta(t)}}\leq\frac{C_1}{\sin^{\frac{3}{4}}\theta(t)},\;\;\;\;t\in(-b_{\lambda_0},t_1].
\end{equation}
\end{claim}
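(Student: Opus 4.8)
The plan is to control the quantity $u(t):=z(t)-\cos\theta(t)$, which is the denominator appearing in \eqref{Finito6}. It is strictly positive on $(-b_{\lambda_0},t_1]$ since $\varphi_{\lambda_0}$ is valued in $\Omega$, and it tends to $0$ as $t\to-b_{\lambda_0}$ because $\varphi_{\lambda_0}(t)\to(0,1)$. On $(-b_{\lambda_0},t_1]$ one has $\theta\in(0,\pi/2]$ and $1<z(t)<\lambda_0$ (as $z$ is strictly increasing on $(-b_{\lambda_0},0)$ with $z(0)=\lambda_0$ and $z(t)\to1$ at the left end), while $\theta'>0$ by Lemma~\ref{ThetaLinhaZero}(i); hence $\theta$ is a diffeomorphism of $(-b_{\lambda_0},t_1]$ onto $(0,\pi/2]$ and one may regard $z$ and $u$ as functions of $\theta$. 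The first reduction: taking square roots, \eqref{Finito6} follows once we establish a lower bound $u\ge c\,\sin^{4/3}\theta$ for some $c>0$, because then $1/\sqrt{u}\le c^{-1/2}\sin^{-2/3}\theta\le c^{-1/2}\sin^{-3/4}\theta$ using $0<\sin\theta\le1$ and $\tfrac23<\tfrac34$. So the whole problem is this lower bound.

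To get it, I would set up a differential inequality for $u$ regarded as a function of $\theta$. From \eqref{sistema}, $u'=\sin\theta+\sin\theta\,\theta'=\sin\theta(1+\theta')$; and since $(\theta')^2=1-\cos^2\theta/z^2=u(z+\cos\theta)/z^2$, the crude bounds $z>1$ and $z+\cos\theta<\lambda_0+1$ give $\theta'<\sqrt{(\lambda_0+1)\,u}$. Dividing $u'$ by $\theta'$,
\begin{equation*}
\frac{du}{d\theta}=\sin\theta\left(1+\frac1{\theta'}\right)\ge\frac{\sin\theta}{\theta'}\ge\frac{\sin\theta}{\sqrt{(\lambda_0+1)\,u}}\,,\qquad\text{i.e.}\qquad\frac{d}{d\theta}\Bigl(\tfrac23u^{3/2}\Bigr)\ge\frac{\sin\theta}{\sqrt{\lambda_0+1}}\,.
\end{equation*}
Integrating over $[\varepsilon,\theta]\subset(0,\pi/2]$ and letting $\varepsilon\to0^+$ (where $u\to0$) gives $\tfrac23u^{3/2}\ge(\lambda_0+1)^{-1/2}(1-\cos\theta)$, and combining with the elementary inequality $1-\cos\theta\ge\tfrac12\sin^2\theta$ yields $u\ge\bigl(\tfrac{3}{4\sqrt{\lambda_0+1}}\bigr)^{2/3}\sin^{4/3}\theta$. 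Feeding this back into the reduction of the previous paragraph proves \eqref{Finito6}, e.g. with $C_1=\bigl(\tfrac{4\sqrt{\lambda_0+1}}{3}\bigr)^{1/3}$.

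The delicate step --- and the only place where the special value $\lambda=\lambda_0$ enters --- is the passage to the limit $\varepsilon\to0^+$ in the integration: it requires the boundary term $u(\varepsilon)^{3/2}$ to vanish, that is, the trajectory must actually reach the corner $(0,1)$ of $\partial\Omega$, which is exactly what Lemma~\ref{UnicLimPoint} and the discussion preceding it provide for $\lambda_0$; for any other $\lambda$ the claim is simply false. Everything else (the chain-rule computations, the fact that $\theta$ is a legitimate reparametrization, and the inequality $1-\cos\theta\ge\tfrac12\sin^2\theta$ on $[0,\pi/2]$) is routine. I note for context that, once \eqref{Finito6} is available, one inserts it into \eqref{Finito7} and bounds $t_1-t\le\lambda_0C_1\int_0^{\pi/2}\sin^{-3/4}u\,du<\infty$, which is how $b_{\lambda_0}<+\infty$ will be deduced.
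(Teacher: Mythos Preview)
Your argument is correct, and it takes a genuinely different route from the paper's. The paper proves the claim by showing that the quotient $\sin^{3/2}\theta(t)/(z(t)-\cos\theta(t))$ has limit $0$ as $t\to-b_{\lambda_0}$, via a chain of L'H\^opital computations (first $\sin^2\theta/\theta'^2\to0$, then $(z-\cos\theta)/\sin\theta\to0$, then $\theta'^2/\sin\theta\to0$, and finally the quotient itself); continuity on $(-b_{\lambda_0},t_1]$ together with this limit gives boundedness and hence $C_1$. You instead set up and integrate the differential inequality $\tfrac{d}{d\theta}\bigl(\tfrac23 u^{3/2}\bigr)\ge(\lambda_0+1)^{-1/2}\sin\theta$ for $u=z-\cos\theta$, obtaining an explicit lower bound $u\ge c\,\sin^{4/3}\theta$.

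Your approach is more elementary and more quantitative: it yields an explicit constant and in fact the sharper exponent $2/3$ in place of $3/4$. On the other hand, the paper's L'H\^opital computations are not wasted effort there, since the intermediate limits (notably $\sin\theta/\theta'\to0$) are reused immediately afterwards to evaluate $\lim\theta''$ and $\lim\theta'''$, which are needed for the $C^3$-extension statement in part~(ii) of Theorem~\ref{curves}. Both arguments hinge on the same crucial input, which you correctly isolate: the boundary condition $u\to0$ at the left endpoint, available precisely because $\lambda=\lambda_0$ is the parameter whose trajectory limits to the corner $(0,1)$.
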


Indeed, since
$\theta(t)\to 0$ and $z(t)\to 1$ when $t\to -b_{\lambda_0}$, from 
\Eqref{sistema} we obtain

\begin{equation}\label{ThetaLinhavaiparazero} 
    \lim\limits_{t\to -b_{\lambda_0}}\theta'(t)=0.
\end{equation}
Then, again by \Eqref{sistema},
\begin{equation}\label{Finito1}
\begin{aligned}
\lim_{t\to
	-b_{\lambda_0}}\frac{\sin^2\theta(t)}{\theta'(t)^2}=& \ \lim_{t\to
	-b_{\lambda_0}}\frac{z^2(t)}{z(t)+\cos\theta(t)}\,
\frac{\sin^2\theta(t)}{z(t)-\cos\theta(t)}\\
=&\ \frac{1}{2}\lim_{t\to
	-b_{\lambda_0}}\frac{\sin^2\theta(t)}{z(t)-\cos\theta(t)}\\
=& \ \frac{1}{2}\lim_{t\to-b_{\lambda_0}}
\frac{2\theta'(t)\sin\theta(t)\cos\theta(t)}{\sin\theta(t)(1+\theta'(t))}\\
=&\ \frac{1}{2}\lim_{t\to
	-b_{\lambda_0}}\frac{2\theta'(t)\cos\theta(t)}{1+\theta'(t)}=0,
\end{aligned}
\end{equation}
and so
\begin{equation}\label{Finito2}
\begin{aligned}
\lim_{t\to -b_{\lambda_0}}\frac{z(t)-\cos\theta(t)}{\sin\theta(t)}
=& \ \lim_{t\to -b_{\lambda_0}}
\frac{\sin\theta(t)(1+\theta'(t))}{\theta'(t)\cos\theta(t)}\\
=& \ \lim_{t\to
	-b_{\lambda_0}}\frac{\sin\theta(t)}{\theta'(t)}=0.
\end{aligned}
\end{equation}
From \Eqref{sistema} and the above equality one obtains
\begin{equation}\label{Finito4}
\lim_{t\to
	-b_{\lambda_0}}\frac{\theta'(t)^2}{\sin\theta(t)}=\lim_{t\to
	-b_{\lambda_0}}\frac{z(t)+\cos\theta(t)}{z^2(t)}\,\frac{z(t)-\cos\theta(t)}{\sin\theta(t)}=0.\nonumber
\end{equation}
Therefore,
\begin{equation}\label{Finito3} 
\begin{aligned}
\lim_{t\to 
	-b_{\lambda_0}}\frac{\sin^{\frac{3}{2}}\theta(t)}{z(t)-\cos\theta(t)}
=& \ \lim_{t\to-b_{\lambda_0}}\frac{\frac{3}{2}\theta'(t)
	\sin^{\frac{1}{2}}\theta(t)\cos\theta(t)}{\sin\theta(t)(1+\theta'(t))}\\
=& \ \frac{3}{2}\lim_{t\to-b_{\lambda_0}}\frac{\theta'(t)}{\sin^{\frac{1}{2}}
	\theta(t)}=0,
\end{aligned}
\end{equation}
and the claim follows.

From \Eqref{Finito7} and \eqref{Finito6}, we obtain
\begin{equation}\label{Finito8}
t_1-t\leq\lambda_0C_1\int_{\theta(t)}^{\pi/2}\sin^{-\frac{3}{4}}udu,\;\;\;
-b_{\lambda_0}<t<t_1.
\end{equation}
Setting $C_2=\inf\left\{\sin u/u:u\in(0,\frac{\pi}{2}]\right\}$, one has
\begin{equation*}\label{Finito9}
\sin^{-\frac{3}{4}}u\leq
C_2^{-\frac{3}{4}}u^{-\frac{3}{4}},\;\;\;\;u\in(0,\frac{\pi}{2}].
\end{equation*}
Using this information in \Eqref{Finito8}, we obtain
\begin{equation}\label{Finito10}
\begin{aligned}
t_1-t\leq\lambda_0C_1C_2^{-\frac{3}{4}}\int_{\theta(t)}^{\pi/2}
u^{-\frac{3}{4}}du
=& \ 4\lambda_0C_1C_2^{-\frac{3}{4}}u^{\frac{1}{4}}\Big|_{\theta(t)}^{\pi/2}\\
=&\ 4 \lambda_0C_1C_2^{-\frac{3}{4}}\left[\left(\pi/2\right)^{\frac{1}{4}}
-\theta(t)^{\frac{1}{4}}\right]\\
<& \ 4\lambda_0C_1C_2^{-\frac{3}{4}}(\pi/2)^{\frac{1}{4}},
\end{aligned}
\end{equation}
for every $t\in(-b_{\lambda_0},t_1)$. Therefore,
$b_{\lambda_0}<+\infty$.

In order to prove that $\alpha_{\lambda_0}$ can be extended to a
profile curve of class $C^3$ defined on $\mathbb R$, we need to
evaluate the limits of $\theta''$ and $\theta'''$ when
$t\to -b_{\lambda_0}$.  
From \Eqref{sistema} one obtains, after some work,
\begin{equation}\label{Rot52}
\theta''(t)=\frac{\sin\theta(t)}{z(t)\theta'(t)}
+\frac{\sin\theta(t)\cos\theta(t)}{z^2(t)}
-\frac{\theta'(t)\sin\theta(t)}{z(t)}.
\end{equation}
Since $\theta(t)\to 0$, $z(t)\to 1$ and
$\theta'(t)\to 0$ when $t\to -b_{\lambda_0}$, it follows from \Eqref{Finito2} and \eqref{Rot52} that
\begin{equation}\label{Rot53}
\lim_{t\to -b_{\lambda_0}}\theta''(t)=\lim_{t\to
	-b_{\lambda_0}}\frac{\sin\theta(t)}{\theta'(t)}=0.
\end{equation}
As to $\lim\limits_{t\to -b_{\lambda_0}}\theta'''(t)$,
observe that, since $z(t)\to 1$, $\theta(t)\to 0$, $\theta'(t)\to
0$ and $\theta''(t)\to 0$, the derivatives of either of the last
two terms on the right hand side of \Eqref{Rot52} goes to zero
when $t\to -b_{\lambda_0}$. Therefore,
\begin{equation}\label{Rot55}
\begin{aligned}
\lim_{t\to -b_{\lambda_0}}\theta'''(t)=&\ \lim_{t\to
	-b_{\lambda_0}}\left(\frac{\sin\theta}{z\theta'}\right)'(t)\\
=&\ \lim_{t\to
	-b_{\lambda_0}}\left\{-\frac{\sin\theta}{z^2}\frac{\sin\theta}{\theta'}
+\frac{1}{z}\left(\frac{\sin\theta}{\theta'}\right)'\right\}(t),\\
=&\ \lim_{t\to
	-b_{\lambda_0}}\left(\frac{\sin\theta}{\theta'}\right)'(t),
\end{aligned}
\end{equation}
where in the last equality we used \Eqref{Finito1}. Since, by
\Eqref{sistema},
\begin{equation}\label{Rot55a}
\frac{\sin\theta}{\theta'}=\frac{z}{\sqrt{z+\cos\theta}}\,
\frac{\sin\theta}{\sqrt{z-\cos\theta}}\,,
\end{equation}
one has
\begin{equation}\label{Rot57}
\begin{aligned}
\left(\frac{\sin\theta}{\theta'}\right)'=&\ \left(
\frac{\sin\theta}{\sqrt{z+\cos\theta}}
-\frac{z\sin\theta(1-\theta')}{2(z+\cos\theta)^{3/2}}\right)
\frac{\sin\theta}{\sqrt{z-\cos\theta}}\\
&\ +\cos\theta-\frac{z(1+\theta')}{2\sqrt{z+\cos\theta}}\,
\frac{\sin^2\theta}{(z-\cos\theta)^{3/2}}.
\end{aligned}
\end{equation}
The first term on the right hand side goes to zero by
\Eqref{Finito1} and \eqref{Rot55a}. Hence,
\begin{equation}\label{Rot58}
\lim_{t\to
	-b_{\lambda_0}}\left(\frac{\sin\theta}{\theta'}\right)'=1-\frac{1}{2\sqrt{2}}\lim_{t\to
	-b_{\lambda_0}}\frac{\sin^2\theta}{(z-\cos\theta)^{3/2}}.
\end{equation}
Using \Eqref{sistema} again, one obtains
$$
\lim_{t\to
	-b_{\lambda_0}}\frac{\sin^2\theta}{(z-\cos\theta)^{3/2}}=\lim_{t\to
	-b_{\lambda_0}}\frac{4\cos\theta\sqrt{z+\cos\theta}}{3z(1+\theta')}=\frac{4\sqrt{2}}{3}.
$$
It now follows from \Eqref{Rot55}, \Eqref{Rot58} and the above
equality that
\begin{equation}\label{Rot59}
\lim_{t\to
	-b_{\lambda_0}}\theta'''(t)=1-\frac{1}{2\sqrt{2}}\frac{4\sqrt{2}}{3}=\frac{1}{3}.
\end{equation}

It is possible to extend $\alpha_{\lambda_0}$ gluing together copies of 
$\alpha_{\lambda_0}$. By \Eqref {ThetaLinhavaiparazero}, \eqref{Rot53} and 
\eqref{Rot59}, this extension is (at least) $C^4$ (recall that if a profile 
curve is of class $C^s$ then its corresponding angle function is of class 
$C^{s-1}$). Another way to extend $\alpha_{\lambda_0}$ is gluing together 
copies of $\alpha_{\lambda_0}$ and horizontal segments with any length and 
with height equal to 1. By the same equations, these extensions are $C^3$ but 
not $C^4$ (see Figure \ref{Figure3}, items (B), (C) and (D), for a sample of 
these 
extensions).


To complete the proof of (ii), it remains to show that any extension of 
$\alpha_{\lambda_0}$ is non-embedded. But clearly this follows from the fact 
that $\alpha_{\lambda_0}$ has a self-intersection, which in turn can be 
proved as in (i) (with $b_{\lambda_0}$ playing the role of $t_0$).

\vskip10pt

\noindent(iii) As can be easily seen, the curve
\begin{equation*}\label{Rot70}
\psi(t)=(\pi+t/\sqrt{2},\sqrt{2}\cos(t/\sqrt{2})),\;\;\;
t\in(-\sqrt{2}\pi/2,\sqrt{2}\pi/2),
\end{equation*}
is a trajectory of $X$. Since $\psi(0)=(\pi,\sqrt{2})$, one has 
$\varphi_{\sqrt{2}}=\psi$.
Then, by \Eqref{Rot59b},
$$
\alpha_{\sqrt{2}}(t)=(-\sqrt{2}\sin(t/\sqrt{2})-\sqrt{2},
\sqrt{2}\cos(t/\sqrt{2})),\;\;\;t\in(-\frac{\sqrt{2}\pi}{2},
\frac{\sqrt{2}\pi}{2}),
$$
which is a parametrization by arc length of the portion of the circle in the 
$xz$-plane with
center $(-\sqrt{2},0,0)$ and radius $\sqrt{2}$ that is above the $x$-axis.

\vskip10pt

\noindent(iv) Let $(\theta_0,z_0)=\lim_{t\to
	-b_{\lambda}}\varphi_{\lambda}(t)$. Since $\lambda\neq\sqrt{2}$
and, by item (iii), $\varphi_{\sqrt{2}}(t)\to(\pi/2,0)$ when $t\to
-b_{\sqrt{2}}=-\sqrt{2}\pi/2$, it follows from Lemma
\ref{UnicLimPoint} that $0<z_0<1$ and either $0<\theta_0<\pi/2$ or
$\pi/2<\theta_0<\pi$.

Suppose, by contradiction, that $b_{\lambda}=+\infty$. Since
$\theta(t)\to\theta_0$ when $t\to -b_{\lambda}$ and either
$0<\theta_0<\pi/2$ or $\pi/2<\theta_0<\pi$, there exist
$\varepsilon>0$ and $t_1\in\mathbb R$ such that
$$
\sin\theta(t)>\varepsilon,\;\;\;t\leq t_1.
$$
Then, by \Eqref{Rot1},
$$
z(t_1)-z_0>z(t_1)-z(t)=\int_t^{t_1}\sin\theta(s)ds>\varepsilon(t_1-t),
$$
for every $t<t_1$, a contradiction. Hence, $b_{\lambda}<\infty$.

Suppose, by contradiction, that $\alpha_{\lambda}$ can be extended
to a profile curve $\widetilde\alpha_{\lambda}(t)=(\widetilde
x(t),0,\widetilde z(t)),\;t\in(a,b),$ where $a<-b_{\lambda}$, say.
Let $\widetilde\theta:(a,b)\to\mathbb R$ be a function satisfying
$$
\widetilde\alpha_{\lambda}'(t)=(\cos\widetilde\theta(t),0,\sin\widetilde\theta
(t)),\;t\in(a,b).
$$
Since $\widetilde\alpha_{\lambda}'(t)=\alpha_{\lambda}'(t)$ for
all $t\in(-b_{\lambda},b_{\lambda})$, and, by \Eqref{Rot59b},
$$
\alpha_{\lambda}'(t)=(x'(t),0,z'(t))=(\cos\theta(t),0,\sin\theta(t)),
\;\;\;t\in(-b_{\lambda},b_{\lambda}),
$$
one concludes that $\theta$ and the restriction of
$\widetilde\theta$ to the interval $(-b_{\lambda},b_{\lambda})$
differ by an integer multiple of $2\pi$. Assuming without loss of
generality that
$\theta=\widetilde\theta|_{(-b_{\lambda},b_{\lambda})}$, one has
$$
\widetilde\theta'(-b_{\lambda})=\lim_{t\to
	-b_{\lambda}}\widetilde\theta'(t)=\lim_{t\to
	-b_{\lambda}}\theta'(t)=0
$$
and
$$
\widetilde z(-b_{\lambda})=\lim_{t\to -b_{\lambda}}\widetilde
z(t)=\lim_{t\to -b_{\lambda}}z(t)=z_0,
$$
which contradicts Lemma \ref{ThetaLinhaZero} (i) as $0<z_0<1$.
\end{proof}

Now we finish the prove of our main theorem.

\vskip5pt

\noindent{\bf Completion of proof of Theorem \ref{Family}:} Let $\alpha_{\lambda}$ and $M_{\lambda}$, $\lambda>1$, be as in the beginning of this section, and let $\mathcal F_1=\{M_{\lambda}:\lambda>\lambda_0\}$. Let $\mathcal A$ be the set of extensions of $\alpha_{\lambda_0}$ constructed in the proof of Theorem \ref{curves} (ii), and $\mathcal F_2$ the (infinite) family of surfaces obtained by the rotation of the images of the curves in $\mathcal A$ around the $x$-axis. By Theorem \ref{surfaces} (i), the surfaces in $\mathcal F_1$ have the properties  stipulated in the statement of the theorem. That the surfaces in $\mathcal F_2$ also meet the stipulated conditions is a consequence of the proof of Theorem \ref{curves} (ii). This establishes the first part of the theorem.

Let $M$ be a complete rotational surface of class $C^2$ satisfying $|A|\equiv 
1$. Applying a rigid motion if necessary, we can assume that its axis of revolution is 
the $x$-axis and that its profile curve $\mathcal C$ is contained in the $xz$-plane. Let  $\alpha(t)=(x(t),0,z(t))$, $t\in(a,b)$, be a 
parametrization of $\mathcal C$ such that $z(t)>0$ and 
$||\alpha'(t)||=1$ for all $t\in(a,b)$. Let $\theta:(a,b)\to\mathbb R$ be a 
$C^1$-function such that 
$$
(x'(t),0,z'(t))=(\cos\theta(t),0,\sin\theta(t)),\;\;\;t\in(a,b).
$$
As we have seen in Section \ref{convexprofcurve}, the function $\theta$ 
satisfies
$$
\theta'^2(t)+\frac{\cos^2\theta(t)}{z^2(t)}=1,\;\;\;t\in(a,b).
$$ 

If $\theta$ is constant, by the above equality one has that $z$ is constant. 
Then $\sin\theta=z'=0$ and so $z=|\cos\theta|\equiv 1$. Being complete, $M$ 
is then a right circular cylinder of radius 1. 

Suppose now that $\theta$ is not constant. Since $\theta$ is monotone by Proposition \ref{monotona}, reparametrizing $\alpha$ if necessary we can assume that $\theta'(t) \geq 0$ for all $t\in(a,b)$. Let 
$t_0\in(a,b)$ such that $\theta'(t_0)>0$. Without loss of generality, we can 
assume that $0\leq\theta(t_0)<2\pi$. Let $(a_1,b_1)\subset(a,b)$ be the 
maximal interval containing $t_0$ on which $\theta'>0$. As we have seen in 
Section \ref{convexprofcurve}, the map 
$t\in(a_1,b_1)\mapsto\varphi(t):=(\theta(t),z(t))$ is an integral curve of the 
vector field $X$ defined by \Eqref{VectorField}. By the results in Section 
\ref{phasefundvectfield}, there is a unique trajectory 
$\varphi_{\lambda}:(-b_{\lambda},b_{\lambda})\to\Omega$ in the family $\{\varphi_{\lambda}\}_{\lambda>1}$ that passes 
through $(\theta(t_0),z(t_0))$. Let $s_0\in(-b_{\lambda},b_{\lambda})$ 
such that $\varphi_{\lambda}(s_0)=(\theta(t_0),z(t_0))=\varphi(t_0)$. 
Assuming without loss of generality that $t_0=s_0$, it follows from the 
maximality of $\varphi_{\lambda}$ that 
$\varphi=\varphi_{\lambda}|_{(a_1,b_1)}$. Write 
$\varphi_{\lambda}=(\theta_{\lambda},z_{\lambda})$ and consider its associated 
profile curve 
$t\in(-b_{\lambda},b_{\lambda})\mapsto\alpha_{\lambda}(t)=(x_{\lambda}(t),0,z_{\lambda}(t))$
 (\cf the beginning of this section). Since 
$$
x_{\lambda}'(t)=\cos\theta_{\lambda}(t)=\cos\theta(t)=x'(t),\;\;\;t\in(a_1,b_1),
$$
it holds that
\begin{equation}\label{new}
\alpha(t)=\alpha_{\lambda}(t)+(d,0,0),\;\;\;t\in(a_1,b_1),
\end{equation}
for some $d\in\mathbb R$.

\vskip5pt

\noindent{\bf Claim.} $a\leq -b_{\lambda}$ and $b\geq b_{\lambda}$.

\vskip5pt

Assuming, by contradiction, that $-b_{\lambda}<a$, one has $-b_{\lambda}<a_1$. Then, since $\theta_{\lambda}'>0$ on $(-b_{\lambda},b_{\lambda})$ 
and $\theta|_{(a_1,b_1)}=\theta_{\lambda}|_{(a_1,b_1)}$, 
$$
\lim_{t\to a_1}\theta'(t)=\lim_{t\to a_1}\theta_{\lambda}'(t)=\theta_{\lambda}'(a_1)>0.
$$ 
From the above inequality and definition of $(a_1,b_1)$ one obtains $a=a_1$. It now follows from \Eqref{new} that $\alpha$ can be extended to an interval containing $(a,b)$ properly, contradicting the completeness of $M$. This contradiction proves that $a\leq -b_{\lambda}$. In the same way, one proves that $b\geq b_{\lambda}$.

It follows from the Claim that $a_1=-b_{\lambda}$. Indeed, if we had $-b_{\lambda}<a_1$, reasoning as above one would obtain $\lim_{t\to a_1}\theta'(t)>0$. On the other hand, from $a<a_1$ one would obtain  $\lim_{t\to a_1}\theta'(t)=0$, a contradiction. In the same manner, one proves that $b_1=b_{\lambda}$. Hence $(a_1,b_1)=(-b_{\lambda},b_{\lambda})$ and, by \Eqref{new},
\begin{equation}\label{new3}
\alpha(t)=\alpha_{\lambda}(t)+(d,0,0),\;\;\;t\in(-b_{\lambda},b_{\lambda}),
\end{equation}
for some $d\in\mathbb R$.

From \Eqref{new3} one obtains that either $\lambda\geq\lambda_0$ or $\lambda=\sqrt{2}$. In fact, if we had $1<\lambda<\sqrt{2}$ or $\sqrt{2}<\lambda<\lambda_0$, from Theorem \ref{curves} (iv) we would obtain that $M$ is a translation of $M_{\lambda}$, and so $M$ would be incomplete, contradicting the hypothesis.

In the case $\lambda>\lambda_0$, it follows from \Eqref{new3} and Theorem \ref{curves} (i) that $\alpha(t)=\alpha_{\lambda}(t)+(d,0,0)$ for all $t\in\mathbb R$, and therefore $M=M_{\lambda}$ (up to translation). 

In the case $\lambda=\sqrt{2}$, it follows from \Eqref{new3} and Theorem \ref{curves} (iii) that $\alpha(t)=\alpha_{\sqrt{2}}(t)+(d,0,0),\;t\in(-b_{\sqrt{2}},b_{\sqrt{2}})$. Since $M$ is complete, one concludes that $M$ is, up to translation, the sphere with center at $(-\sqrt{2},0,0)$ and radius $\sqrt{2}$.  

Finally, consider the case $\lambda=\lambda_0$. By \Eqref{new3}, $M$ is an extension of $M_{\lambda_0}$. Since $M$ is complete and, by Theorem \ref{surfaces} (ii), $M_{\lambda_0}$ is incomplete, one has $a<-b_{\lambda_0}$ and $b>b_{\lambda_0}$. We will conclude that, up to congruence, $\alpha$ belongs to the family $\mathcal A$ and so $M\in\mathcal F_2$. For that we can assume that $\theta'$ is not identically zero on $(a,-b_{\lambda_0})\cup(b_{\lambda_0},b)$, for otherwise $z=1$ outside $(-b_{\lambda_0},b_{\lambda_0})$ and the conclusion holds trivially. We claim that for any $s\in(a,-b_{\lambda_0})\cup(b_{\lambda_0},b)$ at which $\theta'(s)>0$, there is an open interval $I$ of length $2b_{\lambda_0}$ containing $s$ such that $\alpha(I)$ differs from the image of $\alpha_{\lambda_0}$ by a horizontal vector. We will prove the claim in the case $b_{\lambda_0}<s<b$ (the proof in the case $a<s<-b_{\lambda_0}$ is analogous). Denote by $(a_2,b_2)\subset(a,b)$ the maximal interval 
 containing $s$ on which $\theta'>0$. By what we have already proved (\cf 
 \Eqref{new3}), $\alpha((a_2,b_2))$ coincides with a horizontal translation of 
 the image of $\alpha_{\lambda}$, for some $\lambda>1$. Since $a_2>-\infty$ and 
 $z(a_2)=1$ (by Lemma \ref{ThetaLinhaZero}(i), since $\theta'(a_2)=0$), we have 
 $\lambda=\lambda_0$ and the claim is proved. Let $I,J$ be subintervals of 
 $(a,b)$ such that $\alpha(I)$ and $\alpha(J)$ are both horizontal translations 
 of the image of $\alpha_{\lambda_0}$. If the distance between $I$ and $J$ is 
 positive but less than $2b_{\lambda_0}$, then, by the previous claim, one has 
 $\theta'=0$, and hence $z=1$, in the interval between $I$ and $J$. It is now 
 clear that the image of $\alpha$ is made up of curves congruent to 
 $\alpha_{\lambda_0}$ and eventually of horizontal segments of height equal to 
 1. Therefore, $\alpha\in\mathcal A$ and so $M\in\mathcal F_2$.\qed

\bibliography{BarFonHar}
\bibliographystyle{amsplain}

\end{document}